\newtheorem{thm}{Theorem}[section]
\newtheorem{cor}[thm]{Corollary}
\newtheorem{prop}[thm]{Proposition}
\theoremstyle{definition}
\newtheorem{example}[thm]{Example}
\newtheorem{definition}[thm]{Definition}
\newenvironment{ex}{\begin{example}}{%
  \end{example}\ignorespacesafterend
}
\newcommand{\ts}{\textstyle}
\newcommand{\reals}{\mathbb{R}}
\newcommand{\setin}{\subseteq}
\newcommand{\intd}{\,\mathrm{d}}
\begin{document}

\title{A Simultaneous Perturbation Weak Derivative Estimator for Stochastic Neural Networks}


\author{Thomas Flynn$^1$         \and
        Felisa V\'azquez-Abad$^2$ 
}


\date{%
  $^1$Brookhaven National Laboratory ~ ({\tt tflynn@bnl.gov})\\%
  $^2$Hunter College, City University of New York ~ ({\tt felisav@hunter.cuny.edu})
      }

\maketitle

\begin{abstract}
  In this {\color{black} paper} we study  gradient estimation for a network of nonlinear stochastic units known as the Little model.  Many  machine learning systems can be described as networks of homogeneous units, and the Little model is of a particularly general form, which includes as special cases several popular machine learning architectures. However, since a closed form solution for the stationary distribution is not known, gradient methods which work for similar models such as the Boltzmann machine or sigmoid belief network cannot be used.
 To address this we introduce a method to calculate derivatives for this system based on measure-valued differentiation and simultaneous perturbation.  This extends previous works in which gradient estimation algorithms were presented for networks with restrictive features like symmetry or acyclic connectivity. 


\end{abstract}
\section{Introduction}
Many computational models in machine learning can be described as networks of homogeneous units. Although each unit may be very simple, capable of only trivial logical or mathematical operations, the hope is that through a training procedure the interconnection of these units can be arranged in order that the overall network can perform useful tasks, such as classification or regression.  In machine learning, ``training'' means the optimization of the parameters of the model in order to minimize an average loss over available data.
If gradient-based methods are to be used for the optimization, then a fundamental step will be computing the derivative of an appropriate cost function with respect to some network parameters. The exact means to compute the derivatives will depend on the details of the network - including whether the units are deterministic or stochastic, whether they satisfy any smoothness parameters, and network properties like symmetry, or the presence of cycles. While certain combinations of the aforementioned features lend themselves to easy treatment from the perspective of gradient estimation, in other cases it is a difficult problem. In this work we show how measure-valued differentiation can be fruitfully applied to this problem, to yield gradient estimators for very general network structures. This extends previous works in which gradient estimation algorithm's were presented for networks with restrictive features like symmetry or acyclic connectivity.

The networks we are interested in are probabilistic and operate on a finite state space. Each unit can be in one of two states, $0$ or $1$, and if there are $n$ nodes in total, then the state space is $X =\{0,1\}^{n}$. In general for stochastic neural networks, at each time step one or more units may change their state. The particular update rule in the network studied here was first defined in \cite{little1974}, and hence we sometimes refer to it as the Little model. Let $\xi(1),\xi(2) \hdots$ be a sequence of noise vectors in $\reals^{n}$, with the entire collection 
$\{ \xi_i(t) ; i=1,\hdots,n, t=1,2,\hdots \}$ independent and distributed according to the logistic distribution. That is, the cumulative distribution function of $\xi_{i}(t)$ is 
$$F(\xi_{i}(t) < x) = \sigma(x) := \frac{1}{1+\exp(-x)}.$$

{\color{black} The parameters of the model are denoted by $\theta \in \Theta \subset \reals^{n\times n} \times \reals^n $, where $\theta=(w,b)$ comprises the weights $w_{i,j}$ (in network terminology, these are the values of the corresponding links $(i,j)$) and the bias $b_i$ associated with  node $i$. 
}
Define
$
f : 
  X\times\Theta\times \Xi 
  \to
  X
$
 as
\begin{equation}\label{rtn-f}
f_{i}(x,(w,b),\xi)
 = \begin{cases} 
   1 \text{ if } \sum\limits_{j=1}^{n}w_{i,j}x_j + b_{i} > \xi_{i}, \\
   0 \text{ otherwise.}
\end{cases}
\end{equation}
This function $f$ and the noise $\xi(1), \xi(2) \hdots$ determines the operation of the  network; from the initial point $x(0)$ the states follows the recursion 
\begin{equation}\label{rtn-evolve}
x(t+1)
 =
f(x(t),\theta,\xi(t+1))
\end{equation} to generate the next state.
They can be interpreted as threshold networks, where the thresholds are {\color{black} random} at each time step. 

We let $P_{\theta}$ be the Markov kernel corresponding to this recursion, and $P_{\theta}(x^0,x^{1})$
be the probability of going to state 
$x^{1} \in X$ 
from state 
$x^{0} \in X$.
The function 
$u_i(x)$, 
that determines the input to each node at the state $x$  is defined as
\begin{equation}\label{u-def}
u_{i}(x,(w,b)) = \sum\limits_{j=1}^nw_{i,j}x_{j} + b_{i}.
\end{equation}
{\color{black}The above equation describes the input ``flow'' to node $i$ in terms of the network model, adding the bias to the total flow from incoming nodes. }
Then
\begin{equation}\label{p-rtn-def}
P_{\theta}(x^{0},x^{1})
=
\prod_{i=1}^{n}
\sigma(u_{i}(x^0,\theta))^{x^{1}_{i}}
(1-\sigma(u_{i}(x^0,\theta)))^{1-x^{1}_{i}}.
\end{equation}
Alternatively,
we can use the following notation of \cite{neal}: for 
$x \in \{0,1\}$,
 define 
\begin{equation}\label{star-def}
x^{\dag} = 2x - 1.
\end{equation}
Using this, {\color{black} together with  the identity
$1-\sigma(x) = \sigma(-x)$,
an equivalent expression to (\ref{p-rtn-def}) when $x_i\in \{0,1\}$ is given by:}
\begin{equation}\label{rtn-transition}
P_{\theta}( x^{0}, x^{1} ) 
= 
\prod_{i=1}^{n}\sigma( (x_{i}^{1})^{\dag}u_{i}(x^0,\theta)).
\end{equation}


In practice, a user can compute with this stochastic network by fixing an input and iterating the update rule for a large number of steps before observing the network state. To ensure that the long-run statistical behavior of the network is independent of the initial conditions, we should establish ergodicity of the Markov kernel. {\color {black} In Section \ref{ergo-prop}   we show that} $P_{\theta}$ is ergodic and find the convergence rate of the Markov chain in terms of $\|\theta\|$ and $n$. Denote by $\pi_{\theta}$ be the stationary measure of this Markov chain.

Given a cost function $e:X\to\reals$, the optimization problem is then
\begin{equation}\label{zzx-nine}
\min_{\theta \in \Theta}\, \left[ J(\theta) :=  \int_{X}e(x)\intd\pi_{\theta}(x) \right]
\end{equation}
and
$\pi_{\theta}$ is defined as the solution to 
$\pi_{\theta}P_{\theta} = \pi_{\theta}$, for the Markov kernel $P_{\theta}$ defined in (\ref{p-rtn-def}).
In this work the focus is on how to compute the {\color{black} gradient $\nabla_\theta J$}.

\section{Related work}
Several stochastic neural networks on discrete state spaces have been studied, and their gradient estimation procedures are based on having closed form solutions for the resulting probability distributions. The  works \cite{hinton83,peretto1984collective,synchboltzgrad1991} {\color{black}assume specific } constraints on network connectivity - for instance symmetry, or prohibiting cycles.  
The earliest neural network models to be studied from the computational view were the deterministic threshold networks \cite{MCCULLOCH199099,Rosenblatt58theperceptron}. In this model, each unit senses the states of its neighbors, takes a weighted sum of the values, and applies a threshold to determine its next state (either on or off). For single layer versions of these networks, where the units are partitioned into input and output groups, with connections only from input to output nodes, the corresponding optimization problem can be solved by the perceptron algorithm \cite{Rosenblatt58theperceptron}. Any iterative algorithm for optimizing threshold networks has to address the credit assignment problem \cite{minsky1961steps}. This means that during optimization, the algorithm must identify which internal components of the network are not working correctly, and adjust those units to improve the output. The difficulty in solving the credit assignment problem for threshold networks with multiple layers prevents simple deterministic threshold models from being used in complex problems like image recognition. {\color{black} Although there is yet no universal method to train the deterministic threshold networks, recent  research focuses on specific types of networks that can be trained using gradient-descent optimization}.   For instance, one can abandon the threshold units, and work with units that have a smooth, graded, response such as the sigmoid neural networks \cite{rumelhart1986learning}. In this case methods of calculus are available to determine unit sensitivities. These new networks are still deterministic but now operate on a continuous state space. 

Another approach is to keep the space discrete but make the network probabilistic, and use the smoothing effects of the noise to obtain a model one can apply methods of calculus to. One can interpret the sigmoid belief networks in this way. These networks were introduced in \cite{neal} and so named because they combine features of sigmoid neural networks and Bayesian networks. In these networks, when  a unit receives a large positive input it is very likely to turn on, while a large negative input means the unit is likely to remain off. In fact, these networks can be interpreted as threshold networks with random thresholds. The use of the sigmoid function, which is the cumulative distribution function (CDF) of the logistic distribution, leads to an interpretation of a network with thresholds drawn from the logistic distribution. In \cite{neal}, the author derived formulas for the gradient in these networks, and showed how Markov chain Monte Carlo (MCMC) techniques can be used to implement gradient estimators. The networks studied in \cite{neal} had a feed-forward architecture, but one could also define variants that allow cycles among the connections.
 If the connectivity graph in the Little model is acyclic, then one obtains a model resembling the sigmoid belief network. We can enforce this by requiring $w_{i,j} = 0$ if $i<j$. 
In this way one is lead to the random threshold networks. In this case, one would be interested in the long-term average behavior of the network. Such a generalization would resemble the random threshold networks that are our focus. It would be interesting to obtain a gradient estimator for these new networks.

Another motivation to study general random threshold networks comes from the Boltzmann machine  \cite{hinton83,ackley1985learning,harmonium}. This is a network of stochastic units that are connected symmetrically. This means there is feed-back in the network, and the problem in these networks is to optimize the long-term behavior. 
The Boltzmann machine follows an update rule similar to (\ref{rtn-f}), except that the weights are constrained to be symmetric ($w_{i,j} = w_{j,i}$) and the nodes are updated one at a time, instead of all at once. This model was an important ingredient in many machine learning systems \cite{hinton2006fast,MultimodalDBM}.  The symmetry in the network, and the use of the sigmoid function to calculate the probabilities, leads to a nice closed form solution for the stationary measure in this model. Based on formulas for the stationary distribution, expressions for the gradient of long-term costs can be obtained, leading to MCMC based gradient estimators. However, if one changes the model, by for instance using non-symmetric connections, or changing the type of nonlinearity,
{\color{black} closed} formulas are no longer available. Instead, one winds up with a model of random threshold networks.
Specifically, a model like the Boltzmann machine is obtained if the weights are symmetric, meaning
$w_{i,j} = w_{j,i}$. Technically, if one puts a symmetry requirement on our threshold networks, one does not exactly recover the Boltzmann machine, but a variant known as the synchronous or parallel Boltzmann machine \cite{peretto1984collective}. The synchronous Boltzmann machine also has a known, simple, stationary distribution \cite{peretto1984collective}.
This provides another motivation for studying gradient estimation in the Little model.

In the case of networks where cycles are allowed, the work \cite{apolloni1991learning} considered gradient estimation in the finite horizon setting. Our interest is in the long-term average cost in networks that have general connectivity, where only knowledge of the transition probabilities is available. Methods such as forward sensitivity analysis cannot be used in this case, as they rely on the differential structure of the underlying state space. Instead, we propose an algorithm that computes descent directions based on simultaneous perturbation analysis and measure-valued differentiation (MVD). 
 
 
In machine learning, most of the optimization problems are solved either using a closed form of the stationary averages and using a deterministic gradient-based method, or using a stochastic gradient when the cost function is smooth and one can interchange derivative and expectation. However, this limits the class of models that can be used in practice. On the other hand, networks with less constraints may have a more powerful modeling capability but estimating their gradients is not as straight forward and so far, little has been done to apply more sophisticated gradient estimation techniques in part because the typical practical applications require very high dimensional gradients and the code must be very efficient to be of practical use. \\
 
The main contribution of this paper to the area of machine learning is the application of MVD to a feedback neural network in order to train the machine using gradient descent stochastic approximation. The main contribution of the paper to the area of gradient estimation is the description of a novel methodology to efficiently calculate high dimensional gradients. Our proposed method combines the main idea of directional derivatives that has enjoyed unprecedented success with SPSA using finite differences, and  MVD gradient estimation, which yields unbiased estimation and thus, bounded variance.
 
\section{Preliminaries}
Since we are interested in performance of Markov chains, we should introduce our ergodicity framework.

\subsection{Ergodicity Framework}
We will work with the total variation metric: The total variation distance between two probability measures $\mu_1,\mu_2$ on a discrete space $X$ is
$$d_{TV}(\mu_1,\mu_2) = \sup_{B \setin X}|\mu_1(B) - \mu_2(B)|$$
where the supremum is over all subsets of $X$. The abstract result we use is the following: 

\begin{prop}\label{contract-prop}
  Let a Markov kernel $P$ have the property that $\min_{x,y}P(x,y) \geq  \epsilon> 0$.
  Then $P$ is a contraction for $d_{TV}$: For any two measures $\mu_1,\mu_2$, 
  $$
  d_{TV}(\mu_1 P , \mu_2 P) \leq (1-\epsilon)d_{TV}(\mu_1,\mu_2).
  $$
  In particular, if $\pi$ is the stationary measure of $P$ then for any initial measure $\mu$,
  \begin{equation}\label{contract-p}
    d_{TV}(\mu P^{t},\pi) \leq (1-\epsilon)^{t}d_{TV}(\mu,\pi).
    \end{equation}
\end{prop}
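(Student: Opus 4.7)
The plan is to prove this via a Doeblin-style minorization decomposition of the kernel $P$, which is the standard way to turn a uniform lower bound on transition probabilities into a contraction in total variation. The iterated bound will then follow by induction once $\pi = \pi P$ is observed.

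First I would set $\lambda(y) := \min_x P(x,y)$, so that by hypothesis $\lambda(y) \ge \epsilon$ for every $y$, and let $\alpha := \sum_y \lambda(y)$. Since $\sum_y \lambda(y) \le \sum_y P(x,y) = 1$ for any fixed $x$, we have $\alpha \in [\,|X|\epsilon,\, 1\,]$, and in particular $\alpha \ge \epsilon$. Define the probability measure $\nu(y) := \lambda(y)/\alpha$ and the stochastic kernel $Q(x,y) := (P(x,y) - \lambda(y))/(1-\alpha)$ (with the trivial case $\alpha = 1$ handled separately, since then $P(x,y)$ does not depend on $x$ and one step already gives coincidence). By construction
\begin{equation*}
P(x,y) = \alpha\,\nu(y) + (1-\alpha)\,Q(x,y),
\end{equation*}
where $\nu$ is independent of $x$ and $Q$ is a Markov kernel.

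Next I would apply both sides of this decomposition to the signed measure $\mu_1 - \mu_2$. For any $B \subseteq X$,
\begin{equation*}
(\mu_1 P - \mu_2 P)(B) = \alpha\,\nu(B)\sum_{x}\bigl(\mu_1(x)-\mu_2(x)\bigr) + (1-\alpha)\,(\mu_1 Q - \mu_2 Q)(B),
\end{equation*}
and the first term vanishes because $\mu_1$ and $\mu_2$ are probability measures. Taking the supremum over $B$ and using that any Markov kernel is a weak contraction in $d_{TV}$ (so $d_{TV}(\mu_1 Q,\mu_2 Q) \le d_{TV}(\mu_1,\mu_2)$) yields
\begin{equation*}
d_{TV}(\mu_1 P, \mu_2 P) \le (1-\alpha)\,d_{TV}(\mu_1,\mu_2) \le (1-\epsilon)\,d_{TV}(\mu_1,\mu_2),
\end{equation*}
which is the first claim.

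For the iterated bound, I would note that stationarity gives $\pi = \pi P^t$ for every $t$, so applying the contraction inequality $t$ times to $\mu P^{t} = (\mu) P^t$ and $\pi = (\pi) P^t$ produces the geometric decay $d_{TV}(\mu P^t, \pi) \le (1-\epsilon)^t d_{TV}(\mu,\pi)$. The only subtle step is the splitting into $\alpha\nu + (1-\alpha)Q$; verifying that $Q$ is indeed a valid stochastic kernel (nonnegativity and rows summing to one) is the main thing to check carefully, but follows directly from the definitions of $\lambda$ and $\alpha$. Existence and uniqueness of $\pi$ follow from the first inequality by a standard Cauchy argument in the complete metric space of probability measures on the finite set $X$, so I would not expect this to pose any difficulty.
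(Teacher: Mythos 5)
Your proof is correct, but it follows a different route from the paper, which does not prove the proposition from scratch at all: it simply invokes Lemma 2.20 of Pflug's book, which bounds the contraction factor by the Dobrushin ergodicity coefficient $1-\min_{i,j}\sum_{k}\min\{p_{i,k},p_{j,k}\}$ and then observes that the hypothesis $\min_{x,y}P(x,y)\geq\epsilon$ makes this coefficient at most $1-\epsilon$. You instead give a self-contained Doeblin minorization: writing $P(x,y)=\alpha\nu(y)+(1-\alpha)Q(x,y)$ with $\lambda(y)=\min_{x}P(x,y)$ and $\alpha=\sum_{y}\lambda(y)\geq|X|\epsilon\geq\epsilon$, killing the common component $\alpha\nu$ against the signed measure $\mu_1-\mu_2$ (whose total mass is zero), and using that any stochastic kernel is a weak contraction in $d_{TV}$. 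The comparison is mildly interesting: the Dobrushin coefficient uses only pairwise row overlaps $\sum_k\min\{p_{i,k},p_{j,k}\}$, whereas your $\alpha$ is the simultaneous overlap of all rows, so $1-\alpha$ is a (weakly) worse constant than the Dobrushin bound; under the uniform hypothesis here both are $\leq 1-\epsilon$, so nothing is lost for the stated result, and your argument has the advantage of being elementary and fully explicit (including the verification that $Q$ is a genuine kernel and the degenerate case $\alpha=1$). The iteration to \eqref{contract-p} via $\pi P^{t}=\pi$ is exactly what the paper intends, and your closing remark about existence and uniqueness of $\pi$ is not needed since the proposition takes $\pi$ as given.
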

For instance, see
Lemma 2.20 in \cite{pflug-book}.
That lemma says that the convergence rate is at least
$1 - \min_{i,j}\sum_k \min\{p_{i,k},p_{j,k}\}$, and it's clear from our assumption that this is greater than $1-\epsilon$.

Next let us recall some of the basic ideas from measure-valued differentiation and simultaneous perturbation.
\subsection{Measure-Valued Differentiation}
The idea of measure-valued differentiation is to express the derivative of an expectation as the difference of two expectations. Each of these expectations involves the same cost function, but the underlying measures are different. This enables simple, unbiased derivative estimators if these measures are easy to sample from. For simplicity, in the following definitions we will only consider the (relevant to our purposes) case of a finite state space $X$. The method was pioneered by the ``weak derivative'' estimation method of \cite{pflug1990line,pflug-mvd}, and extended to cover discontinuous, and unbounded cost functions in the introductory papers \cite{heidergott2008measure,mvdrandom}. 

\begin{definition}\label{mvd-def}
Consider a  measure 
$\mu_{\theta}$ on a finite state space $X$ that depends on a real parameter $\theta$. 
  The measure 
  $\mu_{\theta}$
  is said to be differentiable at $\theta \in \reals$ if there is a triple 
  $(c_{\theta},\mu^{+}_{\theta},\mu^{-}_{\theta})$,
  consisting of a real number $c_{\theta}$, and two probability measures 
  $\mu^{+}_{\theta},\mu^{-}_{\theta}$ on $X$
  such that, for any function $e: X \to \reals$,
  $$
  \nabla_{\theta}\mu_{\theta}(e)
  =
  c_{\theta}[\mu^{+}_{\theta}(e) - \mu^{-}_{\theta}(e)].
  $$
\end{definition}
An MVD gradient estimator consists of two parts: First, sample a random variable 
$Y^{+}$ distributed according to 
$\mu^{+}_{\theta}$, then sample a random variable 
$Y^{-}$ according to 
$\mu^{-}_{\theta}$, and finally form the estimate 
$\Delta^{MVD} = c_{\theta}[e(Y^{+}) - e(Y^{-})]$.
For more background see \cite{hvapt}.
 The concept of MVD can be extended from measures to Markov kernels, and then applied to derivatives of stationary costs \cite{pflug1990line,pflug-mvd,mvdrandom}.  
\begin{definition}
A Markov kernel 
$P_{\theta}$
that is defined on a finite space $X$ and which depends on a real parameter 
$\theta$ 
is said to be differentiable at $\theta$ if for each 
$x \in X$ there is a triple 
$(c_{\theta}(x),P^{+}_{\theta}(x,\cdot),P^{-}_{\theta}(x,\cdot))$
which is the measure-valued derivative of the measure $P_\theta(x,\cdot)$ at the parameter $\theta$ in the sense of Definition \ref{mvd-def}. 

\end{definition}

If the Markov kernel 
$P_{\theta}$
is ergodic with stationary measure
$\pi_{\theta}$,
then in certain cases it is possible to use the
$(c_{\theta},P^{+}_{\theta},P^{-}_{\theta})$
to compute the stationary derivatives 
$\nabla_{\theta}\pi_{\theta}(e)$ \cite{pflug-mvd}.
The procedure is shown in Algorithm \ref{mvd-der-algo}. The correctness is expressed in 
Theorem \ref{mvd-stat-diff}, a result proved in \cite{pflug-mvd}. It gives a condition on a Markov chain $P_{\theta}$ that guarantees the corresponding stationary costs $\pi_{\theta}(e)$ are differentiable, and establishes that the procedure presented in Algorithm \ref{mvd-der-algo} (see below) can be used to compute the derivative of the stationary cost. Note that we are recalling a simplified version of the Theorem in the case of a finite state space. Similar results on differentiability for discrete chains have been developed by other authors, sometimes leading to different computational methods; see related results in \cite{Cao1998,kirkland03conditioningproperties}.

\begin{algorithm}[H]
  \caption{MVD gradient estimation for Markov chains ($\theta\in\reals$)\label{mvd-der-algo}}
  \For{$t=0,1,\hdots, M^{0}-1 $}{
    Sample $x(t+1)$ from $P_{\theta}(x(t),\cdot)$
  }
  - Sample $x^{+}(0)$ from $P_{\theta}^{+}(x(M^0),\cdot)$

  - Sample $x^{-}(0)$ from $P_{\theta}^{-}(x(M^0),\cdot)$

  \For{$t=0,1,\hdots, M^{1}-1 $}{
    Using common random numbers,
    
    \hspace{1em} Sample $x^{+}(t+1)$ from $P_{\theta}(x^+(t),\cdot)$,

    \hspace{1em} Sample $x^{-}(t+1)$ from $P_{\theta}(x^-(t),\cdot)$.
  }
- Set $
\displaystyle \Delta^{MVD}
 = 
c_{\theta}(x(M^0))\sum\limits_{t=1}^{M^{1}}[e(x^{+}(t)) - e(x^{-}(t))]
$

\Return $\Delta^{MVD}$.
\end{algorithm}

\medskip

Observe that Algorithm \ref{mvd-der-algo} has two settings $M^0$ and $M^1$. To emphasize the dependence of the random variable $\Delta^{MVD}$ on these parameters, we use the notation $\Delta^{MVD}_{M^0,M^1}$.
\begin{thm}[\cite{pflug-mvd}]\label{mvd-stat-diff}
{\color{black} Let $\theta\in\reals$ be a scalar parameter of a family of  Markov kernels $P_\theta$, and let $\delta_x P_\theta$ denote the (conditional) probability measure $P_\theta(x,\cdot)$, given state $x \in X$. Assume that}
$(\delta_{x}P_{\theta})(e)$ 
is differentiable in $\theta$ for each cost function $e$.  
Suppose further that $P_{\theta}$ is a contraction on the space of probability measures in the sense of Inequality \ref{contract-p}.
Then the stationary cost $\pi_{\theta}(e)$ is differentiable, and Algorithm \ref{mvd-der-algo} can be used to estimate the derivatives. Specifically, if we let $\Delta_{M^0,M^1}^{MVD}$ be the output of the algorithm, then 
$$
\lim_{M^{0}\to\infty,M^{1}\to\infty}\mathbb{E}\left[\Delta_{M^0,M^1}^{MVD}\right]
=
\nabla_{\theta}\pi_{\theta}(e).
$$
\end{thm}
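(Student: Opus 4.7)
The plan is to derive a closed-form series expansion for $\nabla_\theta \pi_\theta(e)$ and then identify the expectation of Algorithm \ref{mvd-der-algo}'s output as a truncation of that series whose double limit recovers the full sum.

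For any initial probability measure $\mu$, Proposition \ref{contract-prop} gives $\pi_\theta(e) = \lim_{T\to\infty}(\mu P_\theta^T)(e)$. Differentiating the $T$-fold composition via the product rule yields
\begin{equation*}
\nabla_\theta(\mu P_\theta^T)(e) = \sum_{r=0}^{T-1}(\mu P_\theta^{T-1-r})(\nabla_\theta P_\theta)(P_\theta^{r}e).
\end{equation*}
The crucial observation is that $\nabla_\theta P_\theta$ annihilates constants (each row of $P_\theta$ sums to $1$), so $(\nabla_\theta P_\theta)(P_\theta^r e) = (\nabla_\theta P_\theta)(P_\theta^r e - \pi_\theta(e)\mathbf{1})$, and Proposition \ref{contract-prop} supplies the bound $\|P_\theta^r e - \pi_\theta(e)\mathbf{1}\|_\infty \leq 2(1-\epsilon)^r\|e\|_\infty$. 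Together with boundedness of $c_\theta$ on the finite state space, this gives a geometric estimate $\|(\nabla_\theta P_\theta)(P_\theta^r e)\|_\infty \leq C(1-\epsilon)^r$ that is uniform in $T$ and in $\theta$ on compacta. The series therefore converges absolutely and uniformly, which both legitimises passing to $T=\infty$ inside the sum and permits swapping $\nabla_\theta$ with $\lim_T$, producing
\begin{equation*}
\nabla_\theta \pi_\theta(e) = \sum_{r=0}^{\infty}\pi_\theta(\nabla_\theta P_\theta)(P_\theta^{r}e).
\end{equation*}

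Next I would compute $\mathbb{E}[\Delta^{MVD}_{M^0,M^1}]$ by conditioning on the burn-in state $x(M^0)$. The common random numbers do not affect the mean, and conditionally on $x(M^0)$ the variate $x^{\pm}(t)$ has distribution $P_\theta^{\pm}(x(M^0),\cdot)P_\theta^{t}$. Applying the MVD identity $c_\theta(x)[(P_\theta^{+}-P_\theta^{-})g](x) = ((\nabla_\theta P_\theta)g)(x)$ term-by-term gives
\begin{equation*}
\mathbb{E}[\Delta^{MVD}_{M^0,M^1}] = \sum_{t}(\mu P_\theta^{M^0})(\nabla_\theta P_\theta)(P_\theta^{t}e),
\end{equation*}
where the sum ranges over the algorithm's indices of $t$. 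As $M^0 \to \infty$, the contraction yields $\mu P_\theta^{M^0} \to \pi_\theta$ in total variation uniformly in $\theta$, so each summand converges to its stationary counterpart; as $M^1 \to \infty$, the geometric tail estimate closes the partial sum into the full series, giving $\nabla_\theta\pi_\theta(e)$.

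The main obstacle is justifying the interchange of $\nabla_\theta$ with the long-time limit $\lim_T$. Everything reduces to establishing the uniform geometric decay $\|(\nabla_\theta P_\theta)(P_\theta^r e)\|_\infty \leq C(1-\epsilon)^r$, which in turn requires combining the contraction property with the fact that $\nabla_\theta P_\theta$ kills constants. Once this uniform bound is secured, the rest is bookkeeping with two geometrically convergent limits, which may be taken in either order.
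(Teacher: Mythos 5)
The paper does not actually prove this theorem --- it is quoted from \cite{pflug-mvd} --- so there is no internal proof to compare against; your argument is essentially the standard one from that literature (expand $\nabla_\theta(\mu P_\theta^T)(e)$ by the product rule, use the fact that $\nabla_\theta P_\theta$ annihilates constants together with the total-variation contraction of Proposition \ref{contract-prop} to get the uniform geometric bound $\|(\nabla_\theta P_\theta)(P_\theta^r e)\|_\infty \le C(1-\epsilon)^r$, pass to the limit to obtain $\nabla_\theta\pi_\theta(e)=\sum_{r\ge 0}\pi_\theta\bigl((\nabla_\theta P_\theta)(P_\theta^r e)\bigr)$, and identify the estimator's mean as a truncation of this series), and the steps are sound. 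One detail you should not leave implicit is the index range of the algorithm's sum. Conditioning on $x(M^0)=x$, the summand $e(x^+(t))-e(x^-(t))$ contributes $c_\theta(x)\bigl[(\delta_xP^+_\theta P_\theta^{t})(e)-(\delta_xP^-_\theta P_\theta^{t})(e)\bigr]=\bigl((\nabla_\theta P_\theta)(P_\theta^{t}e)\bigr)(x)$, i.e.\ exactly the term $r=t$ of your series; hence the sum must run from $t=0$ to $M^1$ in order to capture the $r=0$ term $\pi_\theta((\nabla_\theta P_\theta)e)$. As printed, Algorithm \ref{mvd-der-algo} starts its sum at $t=1$ (whereas Algorithm \ref{mvd-der-little} starts at $t=0$), and with the $t=1$ convention your ``partial sum closing into the full series'' would in fact converge to $\nabla_\theta\pi_\theta(e)-\pi_\theta((\nabla_\theta P_\theta)e)$, not to the derivative. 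So either state explicitly that your sum begins at $t=0$, or note the off-by-one in the algorithm as written. Everything else --- the justification of interchanging $\nabla_\theta$ with $\lim_T$ via uniform convergence of the derivative series on compacta, the irrelevance of the common-random-number coupling to the mean, and the uniformity in $M^1$ of the $M^0\to\infty$ error (bounded by $2C(1-\epsilon)^{M^0}/\epsilon$) that lets the double limit be taken in either order --- is correct.
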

{\color{black}More general results on MVD for stationary measures can be found in \cite{heidergott2003taylor,mvdrandom}, including the extension to discontinuous, and unbounded cost functions.}

Observe that we call for the use of common random numbers in the above algorithm. This ensures that the variance of the estimate $\Delta^{MVD}$ can be bounded independently of the parameters $M^{1}$ and $M^{0}$ \cite[Theorem~4.36]{pflug-book}. This can be achieved through any coupling with a contraction property - in the case of a finite state space, using common random numbers is one way - but this is not the only way; see \cite{pflug-book} for details. 

For measure-valued differentiation, like finite differences, it seems that in order to estimate the full derivative for a system with $m$ parameters, $m$ {\color{black}parallel} simulations are required  although the variance characteristics are much more favorable compared to finite differences (see \cite{pflug-book}, Section 4.3). In finite differences, one must trade off bias for variance, but for MVD the variance can be shown to be bounded independently of the parameters $M^1,M^0$ which determine the bias.
\subsection{Simultaneous Perturbation}
One interesting solution to the {\color{black} prohibitive computational complexity} of estimating derivatives with finite differences in high-dimensions is known as simultaneous perturbation \cite{spall1992multivariate}. In this scheme, one picks a random direction $v$, and then approximates the directional derivative using stochastic finite differences. In this case only two simulations are needed for a system with $m$ parameters. Using random directions in stochastic approximation was also studied in \cite[Section~2.3.5]{kushner1978stochastic} and \cite{ermoliev}.

The variance issues of finite differences remain with this approach; in order to decrease the bias of the estimator, one has to deal with a larger variance.  For generating the directions, one possibility is to let $v$ be a random point on the hypercube $\{-1,1\}^{m}$, as suggested in \cite{spall1992multivariate}. For the theoretical analysis, it is important that the directions have zero mean, and that the random variable $\frac{1}{\|v\|}$ be integrable. The procedure is shown in Algorithm \ref{sp-der-algo}.\\

\begin{algorithm}[H]
  \caption{Simultaneous perturbation derivative estimation for Markov chains\label{sp-der-algo}}
- Initialize $\lambda$ to a small positive number.

- Generate a random direction $v$ from the measure 
\begin{equation}\label{dir-measure}
P(v) 
= 
\prod_{i=1}^{n}[\tfrac{1}{2}\delta_{-1}(v_i)+\tfrac{1}{2}\delta_{1}(v_i)]
\end{equation}
\For{$t=0,1,\hdots, M-1 $}{
  
   Sample $x^{+}(t+1)$ from $P_{\theta + \lambda v}(x^+(t),\cdot)$,

   Sample $x^{-}(t+1)$ from $P_{\theta - \lambda v}(x^-(t),\cdot)$.
  }
  - Set $\displaystyle\Delta^{SP} = \frac{e(x^+(M)) - e(x^-(M))}{2\lambda}v$

\Return $\Delta^{SP}$
\end{algorithm}

\medskip

{\color{black} In the above algorithm, the random direction is generated according to  Spall's original SPSA method, namely each component of the random vector is either $-1$ or $1$ with equal probability, and components are independent \cite[Section V-A]{spall1992multivariate}. Other ways to generate random directions have been studied further, and our method can easily be generalized in this way as well. }

This algorithm will form the basis for the gradient estimator we derive for discrete attractor networks below.
\section{Ergodicity of Little model}\label{ergo-prop}
 In this section we consider the ergodicity of  the Little model \cite{little1974}. 
 We will show that $P_{\theta}$ has a contraction property with respect to $d_{TV}$.
 
The contraction coefficient $\alpha$ will depend on the weights of the network; larger weights will lead to a worse bound on the convergence.
Let us calculate the lower bound $\epsilon$ required by Proposition \ref{contract-prop} for the transition probability 
$P_{\theta}(x_0,x_1)$. 
According to the product form of $P_{\theta}$, (\ref{rtn-transition}), it suffices to find an 
$\epsilon_{1}$ such that
$\sigma( (x_{i}^{1})^{\dag}u_{i}(x^{0},\theta)) \geq \epsilon_1$ 
and then we can set $\epsilon = \epsilon_{1}^{n}$.
Note that for any value of $x_{i}^{1}$, we have 
\begin{align*}
  \sigma\left( (x_{i}^{1})^{\dag}u_{i}(x^{0},\theta)\right) 
    &\geq \sigma\left(-|u_{i}(x^{0},\theta)|\right) \\
    &\geq \sigma\left(-\textstyle\sum\limits_{j=1}^n|w_{i,j}| - |b_i|\right) \\
    &\geq 
    \sigma\left(-\|w\|_{\infty} - \|b\|_{\infty}\right).
\end{align*}
where $\|w\|_{\infty}$ is the $\infty$-norm of the matrix $w$.\footnote{This norm for matrices is defined as as $\|w\|_{\infty} = \sup_{\|u\|_{\infty}=1}\|wu\|_{\infty}$, where for the vector $u$ and $wu$, the norm $\|\cdot\|_{\infty}$ is defined in the usual way.} Letting $\epsilon = \sigma\left(-\|w\|_{\infty} - \|b\|_{\infty}\right)^{n}$, results in the formula
$$
d_{TV}(\mu_1 P ,\mu_2 P) 
\leq 
(1 - \sigma(-\|w\|_{\infty} - \|b\|_{\infty})^{n})d_{TV}(\mu_1,\mu_2)$$
{\color{black} It follows} that the predicted speed of convergence decreases as the weights increase.

Given the contraction property, we now consider the differentiability of the stationary distribution. Firstly, we are dealing with a finite state space and a Markov kernel with smooth transition probabilities (due to the smoothness of $\sigma$). Hence we are in a simple setting and the differentiability is relatively easy to establish. For instance Lemma 4 in \cite{pflug-mvd} can be applied. The conclusion is that for any cost function $e$, the stationary expectation $\pi_{\theta}(e)$ is a differentiable function of $\theta$.\\

{\color{black} {\sc Remark:} When using gradient-descent or other updating methods to find optimal $\theta$, one must ensure that either $\theta$ stays always inside a compact set, or that the sequence of consecutive updates is tight. This will follow from the convexity of the cost function, from Lyapunov stability, or from truncations performed in practice when applying such algorithms. This in turn establishes uniform ergodicity and differentiability of the stationary measure  at each step of the updates.}

\section{Gradient Estimation}
Gradient estimation has been studied for closely related models, such as the Boltzmann machine and sigmoid belief networks, as we discuss below. The case of the Little model is somewhat more challenging since there is not a known closed-form solution for the stationary distribution. One has to focus on gradient estimation methods that only use the Markov kernel associated to the process. 

We  propose an estimator which combines features of SP and MVD. The algorithm generates a random direction, as in SP, and then uses measure-valued differentiation to approximate this directional derivative. 
 In this way one deals with a small number of simulations, as in SP, while avoiding the variance issues with finite differences. {\color{black} We call the method {\em simultaneous perturbation measure-valued differentiation}}. The only requirement is that one can compute the measure-valued derivative along arbitrary directions. After developing the method in an abstract setting, we will consider the method in the context of the Little model.
\subsection{SPMVD}

{\color{black}We introduce the directional derivatives, as usual. Following standard notation, the gradient of the expected value of the cost function $e$  under measure $\mu_\theta$ is denoted by $\nabla_\theta \mu_\theta(e)$ and it is a row vector with as many components as the dimension of $\theta$. Therefore, the projection of this vector along a direction $v$ (a column vector with same number of components) is the (scalar) inner product $\nabla_\theta \mu_\theta(e)   \: v$. }

\begin{definition}\label{mvd-dir} Let 
$\mu_{\theta}$
be a measure depending on an $m$-dimensional vector parameter $\theta$. 
Let $v \in \mathbb{R}^{m}$ be a direction. A triple 
$(c_{\theta,v},\mu^{+}_{\theta,v}, \mu^{-}_{\theta,v})$
 is called a \textit{measure-valued directional directional derivative at $\theta$ in the direction $v$} if for all $e:X\to\reals$,
$$
\nabla_\theta \mu_{\theta}(e)\: v 
=
c_{\theta,v}[\mu^{+}_{\theta,v}(e) - \mu^{-}_{\theta,v}(e)].
$$
Note that in this expression, the left hand side is the dot product between the $m$ dimensional vectors $\nabla_\theta \mu_{\theta}(e)\:$ and $v$. The three terms making up the expression on the right side are all real-numbers.
\end{definition}
In practice, one can try to calculate the MVD in direction $v$ as follows. By basic calculus,
$$
\nabla_\theta \mu_{\theta}(e)\: v 
=
\nabla_{\lambda}\big|_{\lambda=0}\,\mu_{\theta + \lambda v}(e).$$
Therefore, to find the MVD of $\mu_{\theta}$ in direction $v$ it suffices to find the {\color{black}usual} scalar, MVD for 
$\mu_{\theta + \lambda v}$ at $\lambda=0$. This is the approach in the following example.

\begin{ex}
Let $\mu_1,\mu_2,\hdots,\mu_m$ be $m$ probability measures, and for a vector parameter $\theta \in \reals^{m}$ define $\mu_{\theta}$ as
$$\mu_{\theta} = \sum\limits_{i=1}^{m}\frac{e^{-\theta_i}}{Z_{\theta}}\mu_i$$
where $Z_{\theta} = \sum\limits_{i=1}^{m}e^{-\theta_i}$.
For any function $e:X\to\reals$ and direction $v \in \reals^{m}$, then, we have
\begin{equation}\label{mu-multivar}
\mu_{\theta + \lambda v}(e) 
=
\sum\limits_{i=1}^{m}\frac{e^{-(\theta_i + \lambda v_i)}}{Z_{\theta + \lambda v_i}}\mu_i(e).
\end{equation}


Introduce the notation $\gamma_{+} = \max \{ 0, \gamma\}$ and $\gamma_{-} = \max\{0, -\gamma\}$, for the positive and negative part of a scalar $\gamma$, respectively.  Then we have the following identities:
$\gamma = \gamma_{+} - \gamma_{-}$ and $|\gamma| = \gamma_{+} + \gamma_{-}$.
Define
$K_{\theta,v} = \sum\limits_{j=1}^{m}e^{-\theta_j}|v_{j}|$.
Differentiating (\ref{mu-multivar}) at  $\lambda=0$, and doing some algebra, one can get the following representation for the directional derivative: 
$$
\nabla_\theta\mu_{\theta}(e)\: v = 
\nabla_{\lambda}\big|_{\lambda=0}\,\mu_{\theta+\lambda v}(e) = 
c_{\theta,v}
\left[
\sum\limits_{i=1}^{m}\alpha_{i,v} \mu_i(e) - \sum\limits_{i=1}^{m}\beta_{i,v} \mu_i(e)
\right]
$$
where
$$c_{\theta,v} = \frac{K_{\theta,v}}{Z_{\theta}},$$
$$\alpha_{i,v} = 
\frac{1}{Z_{\theta}K_{\theta,v}}
\left[(v_i)_{-} Z_{\theta} + \sum\limits_{j=1}^{m}e^{-\theta_i}(v_{j})_{+}\right],
$$
and 
$$\beta_{i,v} = 
\frac{1}{Z_{\theta}K_{\theta,v}}
\left[(v_i)_{+} Z_{\theta} + \sum\limits_{j=1}^{m}e^{-\theta_i}(v_{j})_{-}\right].
$$
Therefore the triple 
$\left(c_{\theta,v},
      \sum\limits_{i=1}^{m}\alpha_i\mu_i,
       \sum\limits_{i=1}^{m}\beta_i\mu_i
\right)$
is the measure-valued derivative of $\mu$ at $\theta$ in the direction $v$.

To generate samples from the mixture $\sum\limits_{i=1}^{m}\alpha_i\mu_i$ one needs to choose one of the components $i=1,..,m$ according to the mixture probabilities $\alpha_i, i=1,\hdots, m$, and then generate a sample from $\mu_i$.  Similarly for the negative measure. Thus, one  needs to generate two random samples from the underlying collection of measures to get an unbiased derivative estimate. This concludes the example.
\end{ex}
Defining the extension {\color{black}of directional derivatives} to Markov chains is straightforward. 
\begin{definition}\label{mvd-kernel-dir} A triple 
$(c_{\theta,v},P^{+}_{\theta,v}, P^{-}_{\theta,v})$
is a measure-valued derivative for the Markov kernel $P$ at $\theta$ in the direction $v$ if for each $x$, 
$(c_{\theta,v}(x),P^{+}_{\theta,v}(x,\cdot),P^{-}_{\theta,v}(x,\cdot))$ is an MVD at $\theta$ in the direction $v$ for the measure $P_{\theta}(x,\cdot)$ in the sense of Definition \ref{mvd-dir}.
\end{definition}
The gradient estimator for stationary costs proceeds by choosing a random direction, and applying the stationary MVD procedure of Algorithm \ref{mvd-der-algo}. The pseudocode is presented as Algorithm \ref{spmvd}. \\

\begin{algorithm}[H]
  \caption{Simultaneous perturbation measure-valued differentiation (SPMVD)\label{spmvd}}
- Generate a random direction $v$ according to the distribution $p$:
\begin{equation}\label{random-dir-dist}
p(v) 
= 
\prod_{i=1}^{m}[\tfrac{1}{2}\delta_{-1}(v_i)+\tfrac{1}{2}\delta_{1}(v_i)]
\end{equation}

-Let
$(c_{\theta,v},P^{+}_{\theta,v},P^{-}_{\theta,v})$
 be a measure-valued derivative for $P_{\theta}$ in the direction $v$.

-Pass $P_{\theta}$ and $(c_{\theta,v},P^{+}_{\theta,v},P^{-}_{\theta,v})$ to scalar MVD (Algorithm \ref{mvd-der-algo}), to obtain the number $\Delta^{MVD}$.

-\Return $\Delta^{MVD}v$.
\end{algorithm}

\medskip
Note that $\Delta^{MVD}$ is a real-number, and the algorithm output, $\Delta^{MVD}v$, is a vector, of the same dimensionality as $\theta$.
 This estimator will be applied to our motivating example, the random threshold networks. 
\subsection{Application to the Little model}
Let us now give the measure-valued directional derivatives for the Little model.
Fix a parameter $\theta.$ A perturbation in the parameter space is represented as a vector
$v\in\reals^{n\times n}\times \reals^{n}$,
where $v_{i,j}$ is a perturbation along the weight from $j$ to $i$ and $v_{i}$ is a perturbation along the bias at node $i$.


 Using definitions (\ref{u-def}), (\ref{star-def}), (\ref{rtn-transition}), and after some algebra, one can obtain the following expression for the directional MVD:
\begin{equation}\label{mvd-eqn}
\nabla_{\theta}
\left[\sum\limits_{x^1}e(x^1)P_{\theta}( x^0, x^{1})\right]v = 
c_{\theta,v}(x^0)\left[
  \sum\limits_{x^1}e(x^1)P^{+}_{\theta,v}(x^0,x^{1}) -
  \sum\limits_{x^1}e(x^1)P^{-}_{\theta,v}( x^0, x^1)
\right],\end{equation}
where 
$$
c_{\theta,v}(x) 
= 
\sum\limits_{i=1}^{n}|v_i|\sigma(u_{i}(x^0,\theta)) 
+ 
\sum\limits_{i=1}^{n}\sum\limits_{j=1}^{n}|v_{i,j}|x_{j}^{0}\sigma(u_{i}(x^0,\theta))),$$
\begin{equation}\label{little-pplus-def}
  \begin{split}
& P^{+}_{\theta,v}(x^{0},x^1) 
 = \\
& \frac{P_{\theta}( x^{0},x^1)}{c_{\theta}(x^0)}\sum\limits_{i=1}^{n}
\left[
  x_{i}^{1}
    \left((v_{i})_{+} + \sum\limits_{j=1}^{n}(v_{i,j})_{+}x_{j}^{0}\right) +
  \sigma(u_{i}(x^{0},\theta))
    \left((v_{i})_{-} + \sum\limits_{j=1}^{n}(v_{i,j})_{-}x_{j}^{0}\right)
    \right].
\end{split}
\end{equation}
and
\begin{equation}\label{little-pminus-def}
  \begin{split}
&P^{-}_{\theta,v}(x^{0},x^1) 
= \\
&\frac{P_{\theta}(x^{0},x^1)}{c_{\theta}(x^0)}
\sum\limits_{i=1}^{n}
\left[
  x_{i}^{1}
    \left((v_i)_{-} + \sum\limits_{j=1}^{n}(v_{i,j})_{-}x_{j}^{0}\right) 
    +
    \sigma(u_{i}(x^{0},\theta))
   \left((v_{i})_{+} + \sum\limits_{j=1}^{n}(v_{i,j})_{+}x_{j}^{0}\right)\right].
\end{split}
\end{equation}
See Section \ref{mvd-der-eqn} for a detailed derivation of Equation \ref{mvd-eqn}. 
This yields two Markov kernels $P^{+}_{\theta,v}$ and $P^{-}_{\theta,v}$, that depend not just on the parameter $\theta$ (as would be the case in scalar MVD) but also the direction of interest $v$. 

In order for SPMVD to be useful, there must be a practical procedure for running the Markov kernels $P^{+}_{\theta,v}$ and $P^{-}_{\theta,v}$.  
The Little networks operate on a large state space, of size $2^{n}$ when $n$ nodes are used, so this is not necessarily trivial. The original Markov kernel $P_{\theta}$ has a relatively simple structure, allowing each node to be updated independently (see Eqn. \ref{rtn-transition}), and one may hope for a similar situation with  the MVD pair $P_{\theta,v}^{+}, P_{\theta,v}^{-}$. 

To investigate this, let us consider the Markov kernel 
$P_{\theta,v}^{+}$. 
Fix an $x_0$ and a $\theta$. We want to see how to {\color{black}generate an $x^1$ sample } from 
$P_{\theta,v}^{+}(x^{0},\cdot)$. 

Define the following variables:

\begin{enumerate}
   \renewcommand{\theenumi}{\textbf{P}\roman{enumi}}
\item\label{d-def}
$d 
= 
\sum\limits_{i=1}^{n}
\sum\limits_{j=1}^{n}
\sigma(u_i(x^{0},\theta))(v_{i,j})_{-}\, x_{j}^{0}
+
\sum\limits_{i=1}^{n}
\sigma(u_i(x^{0},\theta))(v_{i})_{-}
$
\item
$a_i = (v_{i})_{+} + \sum\limits_{j=1}^{n}(v_{i,j})_{+}\, x_{j}^{0}$, \, $i=1,\hdots,n$,
\item
$\beta_i = \sigma(u_i(x^0,\theta))$, \, $i=1,\hdots,n$,
\item\label{c-def}
$c = c_{\theta,v}(x^{0})$.
\end{enumerate}
Each of these depends on $x_0, \theta$ and $v$. Then the probability 
$Q(x) = P_{\theta,v}^{+}(x^0,x)$ 
has the representation
\begin{align}
\begin{split}\label{the-form}
Q(x) &= 
\frac{1}{c}
\left(d + \sum\limits_{i=1}^{n}
x_{i}a_{i}
 \right)
\prod_{i=1}^{n}\beta_i^{x_i}(1-\beta_i)^{1-x_i}.
\end{split}
\end{align}
We will sample from this distribution by sequentially generating the random variables
$x_1,\hdots,x_n$.  First
we will calculate and sample from the marginal distribution 
$Q(x_{1})$. Then we sample from the conditional distribution 
$Q(x_{2} \mid x_1)$, followed by sampling from 
$Q(x_{3} \mid x_{1},x_{2})$ and so on until finally sampling from 
$Q(x_{n} \mid x_1,\hdots, x_{n-1})$. 
This is a standard technique for generating random vectors (see Section 4.6 of \cite{ross1990course}). As we shall see, it is feasible since the conditional probabilities are easy to compute. 

To start, note that by the definition of conditional probability, 
\begin{equation}\label{def-cond}
  Q\left(x_{k} \mid x_{k-1},\hdots,x_1\right) 
  = 
  \frac{Q(x_{k},x_{k-1}\hdots,x_1)}
       {Q(x_{k-1},\hdots,x_1)}
\end{equation}
Based on this equation, if we can compute the marginal probabilities quickly then we can compute the conditional probabilities quickly. Starting from the formula (\ref{the-form}), one can show that  for $x_1 \in \{0,1\}$,
\begin{equation}\label{little-p-one}
\begin{split}
Q(x_1) 
&= 
\sum\limits_{x_2\in\{0,1\}}\hdots\sum\limits_{x_n\in\{0,1\}}Q(x_1,x_2,\hdots,x_n)
\\
&=
\frac{1}{c}\:\beta_1^{x_1}(1-\beta_1)^{1-{x_1}}
\left( d + \alpha_1x_1 + \sum\limits_{k=2}^{n}\alpha_i\beta_i \right)
\end{split}
\end{equation}
and
for any 
$(x_{k},\hdots,x_1) \in \{0,1\}^{k}$,
\begin{equation}\label{little-p-gen}
Q(x_{k}, x_{k-1} ,\hdots, x_1) 
= 
\frac{1}{c}
    \left(
      d + \sum\limits_{i=1}^{k}\alpha_i x_i 
        + \sum\limits_{i=k+1}^{n}\alpha_i\beta_i
    \right)
    \prod_{i=1}^{k}\beta_i^{x_i}(1-\beta_i)^{1-x_i}.
\end{equation}
See Appendix \ref{mvd-p} for a derivation of (\ref{little-p-one}) and (\ref{little-p-gen}). Based on these formulas, we construct the sampling algorithm (Algorithm \ref{mvd-samp-algo}).  Using the identity (\ref{little-p-gen}), we can write (\ref{def-cond}) as 
\begin{equation}\label{simple-form}
Q(x_k = 1 \mid x_{k-1},\hdots,x_1) = 
  \frac{
   \left(
     d + \sum\limits_{i=1}^{k-1}\alpha_ix_i + \alpha_k + \sum\limits_{i=k+1}^n\beta_i\alpha_i
   \right)\beta
   }
   { 
     d + \sum\limits_{i=1}^{k-1}\alpha_ix_i + \sum\limits_{i=k}^n\beta_i\alpha_i
   }
\end{equation}
\begin{algorithm}[H]
  \caption{Sampling from a measure of the form (\ref{the-form})\label{mvd-samp-algo}}
  - Set 
  $\delta_1
  = 
  Q(x_1 = 1)$ via Equation (\ref{little-p-one}).

  - Set $x_1=1$ with probability $\delta_{1}$, otherwise $x_1=0$.

  \For{$k=2,\hdots,n$}{

     Set 
    $\delta_{k} = Q(x_k=1\mid x_{k-1},\hdots,x_1)$ via Equation  (\ref{simple-form}).

     Set $x_{k}=1$ with probability $\delta_{k}$, otherwise $x_k=0$.

   }
\Return $(x_1,\hdots,x_n)$.
\end{algorithm}

\medskip

The following proposition certifies the correctness of Algorithm 
\ref{mvd-samp-algo}. 
\begin{prop}
For any data $c,d, a_1,\hdots,a_n, \beta_1,\hdots,\beta_n$, the output of Algorithm \ref{mvd-samp-algo} is distributed as follows:
$$
  p(x_1,\hdots, x_n) = 
  \frac{1}{c}\left(d + \sum\limits_{i=1}^n x_i a_i\right)
  \prod_{i=1}^n\beta_i^{x_i}(1-\beta_i)^{1-x_i}.
$$
\end{prop}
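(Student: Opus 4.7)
The plan is to verify that the sequential sampler produces the target distribution by telescoping conditional probabilities. The algorithm samples $x_1$ with $\mathbb{P}(x_1=1) = \delta_1 = Q(x_1=1)$ (from (\ref{little-p-one})) and then, for $k\geq 2$, samples $x_k$ with $\mathbb{P}(x_k=1 \mid x_1,\hdots,x_{k-1}) = \delta_k = Q(x_k=1\mid x_{k-1},\hdots,x_1)$ (from (\ref{simple-form})). Since each Bernoulli draw with parameter $\delta_k$ gives probability $\delta_k^{x_k}(1-\delta_k)^{1-x_k}$ and by construction this equals $Q(x_k \mid x_{k-1},\hdots,x_1)$ for either value of $x_k$, the joint probability that the algorithm outputs $(x_1,\hdots,x_n)$ is
$$Q(x_1)\prod_{k=2}^{n}Q(x_k\mid x_{k-1},\hdots,x_1).$$
By the definition of conditional probability (equation (\ref{def-cond})), each factor can be written as $Q(x_k,\hdots,x_1)/Q(x_{k-1},\hdots,x_1)$, so the product telescopes to $Q(x_1,\hdots,x_n)$.

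Thus the correctness of the algorithm reduces to the correctness of the two formulas (\ref{little-p-one}) and (\ref{little-p-gen}) used to compute $\delta_1$ and $\delta_k$. In particular, it suffices to observe that specializing (\ref{little-p-gen}) at $k=n$ (so that the trailing sum $\sum_{i=k+1}^n \alpha_i\beta_i$ is empty) yields exactly
$$Q(x_1,\hdots,x_n) = \frac{1}{c}\left(d + \sum_{i=1}^n \alpha_i x_i\right) \prod_{i=1}^n \beta_i^{x_i}(1-\beta_i)^{1-x_i},$$
which matches the claimed distribution (with $\alpha_i=a_i$ in the notation of items \ref{d-def}--\ref{c-def}).

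It remains to justify the marginal identity (\ref{little-p-gen}), which the paper defers to Appendix \ref{mvd-p}. I would prove it by a direct summation over the trailing coordinates in the joint (\ref{the-form}): fix $k$ and sum over $(x_{k+1},\hdots,x_n)\in\{0,1\}^{n-k}$. The terms $d + \sum_{i=1}^{k} a_i x_i$ do not depend on $x_{k+1},\hdots,x_n$, so they factor out against a product that sums to $1$ using $\sum_{x\in\{0,1\}}\beta_i^{x}(1-\beta_i)^{1-x}=1$. The remaining contributions come from the cross terms $a_i x_i$ for $i>k$; applying $\sum_{x\in\{0,1\}} x\,\beta_i^{x}(1-\beta_i)^{1-x}=\beta_i$ together with the previous identity yields the $\sum_{i=k+1}^n a_i\beta_i$ term. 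Collecting everything reproduces (\ref{little-p-gen}).

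The principal obstacle is only bookkeeping: carefully separating the ``mass'' terms from the ``coordinate-dependent'' terms in (\ref{the-form}) when marginalizing, and keeping the notation for $\alpha_i$ versus $a_i$ consistent. No nontrivial probabilistic argument is needed beyond the chain rule and the two elementary Bernoulli sums above; once (\ref{little-p-gen}) is established, the telescoping argument finishes the proof immediately.
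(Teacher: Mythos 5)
Your proposal is correct and follows essentially the same route as the paper: the paper's proof also reduces the claim to the chain-rule telescoping via the conditional-probability identity (\ref{def-cond}) together with the marginal formulas (\ref{little-p-one}) and (\ref{little-p-gen}), which the appendix establishes by exactly the direct summation over trailing coordinates that you sketch. Your write-up is in fact somewhat more explicit than the paper's one-line proof, but there is no substantive difference in approach.
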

\begin{proof}
This follows from the identity 
(\ref{def-cond}) 
and the computational formulas 
(\ref{little-p-one}), 
(\ref{little-p-gen}), 
whose correctness is established in the appendix.
\end{proof}

Analogous to the variables (\ref{d-def}) - (\ref{c-def}), we have definitions for $P^-$:
\begin{enumerate}
   \renewcommand{\theenumi}{\textbf{M}\roman{enumi}}
\item\label{md-def}
$d 
= 
\sum\limits_{i=1}^{n}
\sum\limits_{j=1}^{n}
\sigma(u_i(x^{0},\theta))(v_{i,j})_{+}\: x_{j}^{0}
+
\sum\limits_{i=1}^{n}
\sigma(u_i(x^{0},\theta))(v_{i})_{+}
$
\item
$a_i = (v_{i})_{-} + \sum\limits_{j=1}^{n}(v_{i,j})_{-}\: x_{j}^{0}$, \, $i=1,\hdots,n$,
\item
$\beta_i = \sigma(u_i(x^0,\theta))$, \, $i=1,\hdots,n$,
\item\label{mc-def}
$c = c_{\theta,v}(x^{0})$.
\end{enumerate}
\begin{cor}
Let $c,d,a_1,\hdots,a_n,\beta_1,\hdots,\beta_n$ be defined as in (\ref{d-def}) - (\ref{c-def}). 
Then the distribution of the output of Algorithm \ref{mvd-samp-algo} is 
$P_{\theta,v}^{+}(x_0,x_1)$. 

Alternatively, if the $c,d,a_1,\hdots,a_n,\beta_1,\hdots,\beta_n$ are defined as in (\ref{md-def}) - (\ref{mc-def}) then the distribution of the output of Algorithm \ref{mvd-samp-algo} is 
$P_{\theta,v}^{-}(x_0,x_1)$.
\end{cor}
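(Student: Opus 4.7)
The plan is to obtain the corollary as an immediate application of the preceding proposition. That proposition already shows that Algorithm \ref{mvd-samp-algo} samples from the generic parametric family (\ref{the-form}) given \emph{any} valid data $(c,d,a_1,\ldots,a_n,\beta_1,\ldots,\beta_n)$. So the only thing left to do is to verify that the two kernels of interest, $P^{+}_{\theta,v}(x^0,\cdot)$ and $P^{-}_{\theta,v}(x^0,\cdot)$, each fit the template (\ref{the-form}) when the data are instantiated as in \ref{d-def}--\ref{c-def} and \ref{md-def}--\ref{mc-def} respectively.

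First I would handle $P^{+}_{\theta,v}$. Substitute the product form (\ref{rtn-transition}) for $P_{\theta}(x^0,x^1)$ into (\ref{little-pplus-def}) and identify $\beta_i := \sigma(u_i(x^0,\theta))$ so that $P_{\theta}(x^0,x^1) = \prod_{i=1}^{n}\beta_i^{x_i^1}(1-\beta_i)^{1-x_i^1}$. In the bracketed summation of (\ref{little-pplus-def}), split the sum over $i$ into the two groups distinguished by whether the multiplier is $x_i^1$ or $\sigma(u_i(x^0,\theta))=\beta_i$. The first group matches $\sum_{i=1}^{n} x_i^1 a_i$ with the choice $a_i = (v_i)_{+} + \sum_{j=1}^{n}(v_{i,j})_{+} x_j^0$ of (\ref{d-def}); the second group, which does not depend on $x^1$, matches the constant $d = \sum_{i=1}^{n}\beta_i\bigl((v_i)_{-}+\sum_{j=1}^{n}(v_{i,j})_{-}x_j^0\bigr)$ of \ref{d-def}. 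Taking $c=c_{\theta,v}(x^0)$ then gives exactly the form (\ref{the-form}), and the proposition applies.

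For $P^{-}_{\theta,v}$ the argument is identical up to swapping the roles of the positive and negative parts $(\cdot)_{+}$ and $(\cdot)_{-}$ inside the bracket of (\ref{little-pminus-def}). This is precisely what the definitions \ref{md-def}--\ref{mc-def} encode, so the same matching succeeds and the proposition again yields the conclusion. The proof reduces to a bookkeeping exercise, and I do not expect any real obstacle beyond keeping the indexing of $+$ and $-$ parts consistent.

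Thus the proof would read, in its entirety, as: by inspection, formulas (\ref{little-pplus-def}) and (\ref{little-pminus-def}) are special cases of (\ref{the-form}) with data given by \ref{d-def}--\ref{c-def} and \ref{md-def}--\ref{mc-def} respectively; applying the preceding proposition in each case yields the claim.
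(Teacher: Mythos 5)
Your proposal is correct and is precisely the argument the paper intends (the corollary is left without an explicit proof because it is exactly this bookkeeping): substituting $\beta_i=\sigma(u_i(x^0,\theta))$ into (\ref{little-pplus-def}) and (\ref{little-pminus-def}) exhibits each kernel as an instance of the template (\ref{the-form}) with the data \ref{d-def}--\ref{c-def}, respectively \ref{md-def}--\ref{mc-def}, after which the preceding proposition applies verbatim.
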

One can use Algorithm \ref{mvd-samp-algo} to simulate the Markov chains 
$P_{\theta,v}^{+}$ and 
$P_{\theta,v}^{-}$ that are needed in the directional MVD algorithm. Two copies would be used - one with the data for 
$P_{\theta,v}^+$ and one for $P_{\theta,v}^-$.

The gradient estimation procedure is summarized below. It is the MVD gradient estimation algorithm (Algorithm \ref{mvd-der-algo}), customized to include random directions and the special sampling algorithm (Algorithm \ref{mvd-samp-algo}). \\

\begin{algorithm}[H]
  \caption{MVD gradient estimation for the Little model\label{mvd-der-little}}
  - Generate a random direction $v$ from the distribution (\ref{random-dir-dist}).
  \vspace{1em}

  \For{$t=0,1,\hdots, M^{0}-1 $}{
     Sample $x(t+1)$ from $P_{\theta}(x(t),\cdot)$.
  }

  \vspace{1em}
  - {\color{black} Let $x^0 = x(M)$}.
  
    \vspace{1em}
    
    - Calculate $c^+,d^+,\alpha_1^+,\hdots,\alpha_n^+,\beta_1^+,\hdots,\beta_n^+$ according to formulas (\ref{d-def})-(\ref{c-def}).

  - Run Algorithm \ref{mvd-samp-algo} using data $c^+,d^+,\hdots,$ to obtain $x^+(0)$.
 
  \vspace{1em}
  - Calculate $c^-,d^-,\alpha_1^-,\hdots,\alpha_n^-,\beta_1^-,\hdots,\beta_n^-$ according to formulas (\ref{md-def})-(\ref{mc-def}).

  - Run Algorithm \ref{mvd-samp-algo} using data $c^-,d^-,\hdots,$ to obtain $x^-(0)$.

  \vspace{1em}
  \For{$t=0,1,\hdots, M^{1}-1 $}{
    Using common random numbers,
    
    \hspace{1em} Sample $x^{+}(t+1)$ from $P_{\theta}(x^+(t),\cdot)$,

    \hspace{1em} Sample $x^{-}(t+1)$ from $P_{\theta}(x^-(t),\cdot)$.
  }
- Set $
\displaystyle \Delta^{MVD}
 = 
c_{\theta}(x(M^0))\sum\limits_{t=0}^{M^{1}}[e(x^{+}(t)) - e(x^{-}(t))].
$

\Return $\Delta^{MVD}v$.
\end{algorithm}
Observe that the algorithm uses a coupling by common random numbers - see the discussion immediately after Algorithm \ref{mvd-der-algo} for the discussion of why this is case, and alternatives.

The theoretical soundness of this procedure rests on Theorem \ref{mvd-stat-diff}. That theorem tells us that conditioned on the random direction $v$, the output $\Delta^{MVD}$ converges in expectation to the directional derivative of the stationary cost in direction $v$, as $M^{0}$ and $M^{1}$ tend to infinity. This is established more formally in the following Proposition.
\begin{prop}
  Let $\Delta^{MVD}_{M^0,M^1}$ and $v$ be as defined in Algorithm \ref{mvd-der-little}.
  Then
  $$\lim_{M^0\to\infty ,M^1 \to \infty}\mathbb{E}\left[\Delta^{MVD}_{M^0,M^1}v\right] = \nabla_{\theta} J(\theta).$$
\end{prop}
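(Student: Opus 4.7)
The plan is to condition on the random direction $v$ and reduce the claim to an application of Theorem \ref{mvd-stat-diff}. For each fixed $v \in \{-1,+1\}^m$, the inner workings of Algorithm \ref{mvd-der-little} are precisely the scalar MVD procedure of Algorithm \ref{mvd-der-algo} applied to the one-parameter family of Markov kernels $\lambda \mapsto P_{\theta + \lambda v}$ at $\lambda = 0$, with weak-derivative triple $(c_{\theta,v}, P^+_{\theta,v}, P^-_{\theta,v})$. The correctness of this triple as a directional MVD in the sense of Definition \ref{mvd-kernel-dir} is given by equation (\ref{mvd-eqn}), and the correctness of Algorithm \ref{mvd-samp-algo} for sampling from these kernels is the content of the preceding Corollary.

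Having identified this reduction, I would apply Theorem \ref{mvd-stat-diff} conditionally on $v$. The required hypotheses are (i) differentiability of $(\delta_x P_{\theta+\lambda v})(e)$ in $\lambda$, which holds for each $v$ because $\sigma$ is smooth and the transition probabilities (\ref{rtn-transition}) have a finite product form, and (ii) the contraction property (\ref{contract-p}), which was established for $P_\theta$ in Section \ref{ergo-prop} and depends only on $\|w\|_\infty$ and $\|b\|_\infty$, so it applies uniformly to $P_{\theta+\lambda v}$ for $\lambda$ in a neighborhood of $0$. Thus, conditionally on $v$,
\[
\lim_{M^0,M^1 \to \infty} \mathbb{E}\!\left[\Delta^{MVD}_{M^0,M^1} \,\big|\, v\right] \;=\; \nabla_\lambda\big|_{\lambda=0}\, \pi_{\theta+\lambda v}(e) \;=\; \nabla_\theta J(\theta)\, v,
\]
where the last equality is the chain rule and the right-hand side is the inner product of the row gradient with the column direction.

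It remains to take the outer expectation and multiply by $v$. Writing $\mathbb{E}[\Delta^{MVD}_{M^0,M^1} v] = \mathbb{E}_v\!\left[ v \, \mathbb{E}[\Delta^{MVD}_{M^0,M^1} \mid v]\right]$ and exchanging the limit with the outer expectation, which is legitimate because $v$ takes only finitely many ($2^m$) values, I obtain
\[
\lim_{M^0,M^1\to\infty} \mathbb{E}\!\left[\Delta^{MVD}_{M^0,M^1}\, v\right] \;=\; \mathbb{E}_v\!\left[ v\, (\nabla_\theta J(\theta)\, v) \right] \;=\; \mathbb{E}_v[v v^{\top}]\; \nabla_\theta J(\theta)^{\top}.
\]
Because the components of $v$ are independent symmetric $\pm 1$ random variables under (\ref{random-dir-dist}), $\mathbb{E}_v[v v^{\top}] = I$, and the right-hand side equals $\nabla_\theta J(\theta)$ (viewed as a column), which is the desired conclusion.

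The main obstacle, such as it is, is the bookkeeping in the first step: verifying carefully that the randomness inside Algorithm \ref{mvd-der-little} — the burn-in under $P_\theta$, the two samples drawn from $P^\pm_{\theta,v}$ at time $M^0$, and the subsequent coupled simulations under common random numbers — is being fed into Theorem \ref{mvd-stat-diff} in exactly the form required by its statement, for the parameterized family $\lambda \mapsto P_{\theta+\lambda v}$. Once this identification is made and uniform contraction in a neighborhood of $\lambda = 0$ is noted, the remaining steps are essentially the standard argument showing that a Rademacher directional derivative is an unbiased estimator of the gradient.
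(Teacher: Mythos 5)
Your proof is correct and follows essentially the same route as the paper: condition on $v$, invoke Theorem \ref{mvd-stat-diff} for the scalar family $\lambda \mapsto P_{\theta+\lambda v}$ to get the directional derivative $\nabla_\theta J(\theta)\,v$, exchange the limit with the outer expectation using the finiteness of the support of $v$, and conclude via $\mathbb{E}[vv^{\top}]=I$ (which the paper writes out componentwise using $\mathbb{E}[v_i]=0$ and $\mathbb{E}[v_i^2]=1$). Your explicit verification of the theorem's hypotheses for the perturbed family is a small addition the paper leaves implicit, but the argument is the same.
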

\begin{proof}
  By Theorem \ref{mvd-stat-diff}, we know that for any sampled direction $v$,
  \begin{equation}\label{eqnX}
    \lim_{M^0\to\infty ,M^1 \to \infty}\mathbb{E}[\Delta_{M^0,M^1}^{MVD} \mid v] = \nabla_{\theta}J(\theta) \: v.
    \end{equation}
  In what follows,  we will omit the notation indicating the dependence on
  $M^0, M^1$,
  and write
  $\Delta^{MVD}$
  instead. By conditioning on $v$, then,
  \begin{equation}\label{eqnY}
    \begin{split}
    \lim_{M^0 \to \infty,M^1 \to \infty}\mathbb{E}\left[\Delta^{MVD} v\right]
    &=
    \lim_{M^0\to\infty,M^1 \to \infty}
    \mathbb{E}\left[ \mathbb{E}\left[\Delta^{MVD} v \mid v \right] \right]
    \\
    &=
    \mathbb{E}\left[\lim_{M^0\to\infty,M^1 \to \infty}
      \mathbb{E}\left[\Delta^{MVD}  \mid v \right] v \right].
    \end{split}
  \end{equation}
  The interchange of the limit and expectation in the above equalities is allowed since $v$ is a discrete random variable that can only take values in the finite set $\{-1,1\}^{n}$.
  Continuing, then, we can combine Equation \ref{eqnX} with Equation \ref{eqnY} to see that
  \begin{equation}\label{eqnZ}
    \lim_{M^0\to\infty,M^1 \to \infty}
    \mathbb{E}
    \left[\Delta^{MVD}  v\right] 
    =
    \mathbb{E}
    \left[ (\nabla_{\theta}J(\theta) \: v) v\right].
  \end{equation}
  In terms of the Euclidean basis vectors $e_i$, we have $v = \sum\limits_{i=1}^{n}v_ie_i$, and
  \begin{equation}\label{eqnA}
    \begin{split}
  \mathbb{E}\left[(\nabla_{\theta} J(\theta) v)v \right]
    &=
    \mathbb{E}\left[\sum\limits_{i=1}^{n} (\nabla_{\theta} J(\theta) v)v_ie_i\right] \\
    &= 
    \mathbb{E}\left[
      \sum\limits_{i=1}^{n}\sum\limits_{j=1}^{n} \nabla_{\theta_j}J(\theta) v_j v_ie_i
      \right] \\
    &= 
    \mathbb{E}\left[\sum\limits_{i=1}^{n} \nabla_{\theta_i}J(\theta) v_i^2e_i\right] \\
    &=
    \sum\limits_{i=1}^{n}\nabla_{\theta_i}J(\theta) e_i = \nabla_{\theta}J(\theta).
    \end{split}
  \end{equation}
  In the third equality we used the independence of the $v_i$, and that $\mathbb{E}[v_i] = 0$ for all $i$. In the fourth inequality, we used that  $\mathbb{E}[v_i^2] = 1$.
  Combining \ref{eqnZ} with \ref{eqnA} yields the result.
\end{proof}
This result can be compared with Lemma 1 of \cite{spall1992multivariate}, which concerns the bias in the traditional SPSA gradient estimates from Algorithm \ref{sp-der-algo}. 

A complexity analysis reveals that running the algorithm requires $\mathcal{O}\left( (M^{0} + M^{1} + 1 )N^{2}\right)$ operations. Note that this assumes full connectivity of the underlying network, which results in $N^{2}$ parameters. This complexity follows from the fact that it takes $\mathcal{O}\left(N^{2}\right)$ operations to iterate the underlying chain for one step, and also that our intermediate step of preparing and running Algorithm 4 takes $\mathcal{O}(N^{2})$ operations. 

In the next section we empirically investigate the behavior of Algorithm \ref{mvd-der-little} when it used inside an optimization procedure.

\section{Numerical Experiments}



\begin{figure}[t]
\begin{center}
\includegraphics[width=0.5\linewidth]{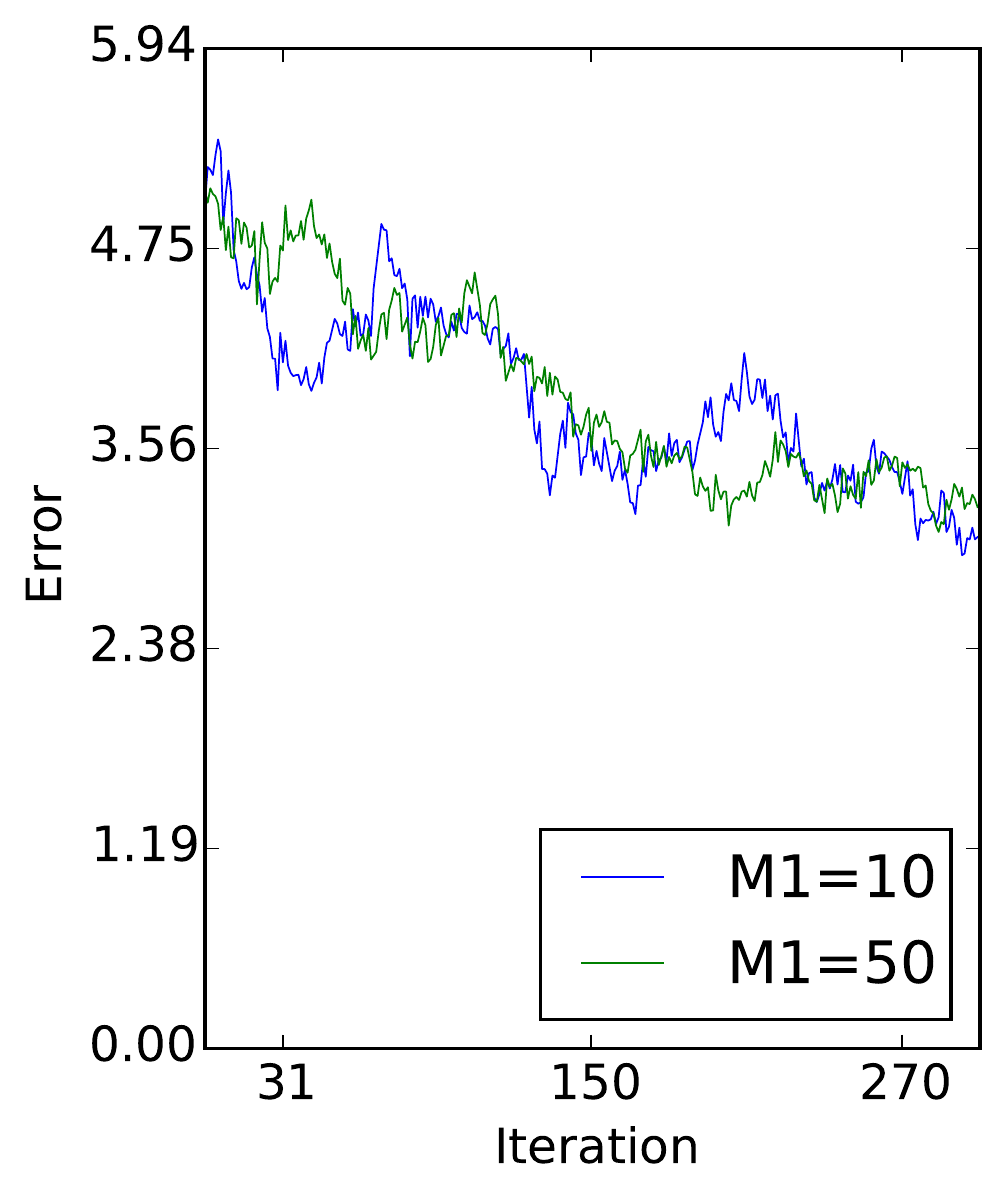}
\end{center}
\caption{Error trajectories of MVD-based optimization for different values of $M^1$.\label{fig:mvderr}}
\end{figure}
We have implemented Algorithm \ref{mvd-der-little} to estimate gradients as part of  a neural network training procedure. The network was trained to classify digits from the MNIST dataset\footnote{http://yann.lecun.com/exdb/mnist/}. The network was trained using the first 5000 images digits of the dataset. We report how the empirical error evolved during training.

Formally, our objective function is the sum of $k=5000$ terms $J^{1},J^{2},\hdots, J^{k}$, one for each image.
For simplicity, let us consider the term for a single image.
In this case, each training datum consists of a $28\times 28$ pixel image $v \in \{0,1\}^{784}$ and a label $\ell \in \{0,1\}^{10}$.
The vector $\ell$ is an indicator vector, with a single entry equal to one in the position corresponding to the class of the image and zeros elsewhere. The first $784$ nodes of the network receive the image pixels as an external input. The last ten nodes are output units.
  Hence the network has $784+10=794$ units (e.g. $n=794$). Among the units we allow arbitrary connectivity, resulting in $794\times 794$ connections. 
 The error function $e$ is $$e(x) = \sum\limits_{i=1}^{10}|x_{784+i} - \ell_i|$$ and the stationary cost is 
 $J_{\theta} = \mathbb{E}_{x\sim\pi_{\theta}}[e(x)]$.
 Note that $\pi_{\theta}$ depends on the parameters $\theta$ and also the external input $v$. 

In words, the goal of training is to find the parameters $\theta$ which guarantee that when the image $v$ is fixed as the input to the network, the network will compute the class of the image and indicate this class at the output nodes.

Each weight $w_{i,j}$ is initialized with a sample from a uniform distribution on $[-0.01,0.01]$. The biases $b_i$ are initialized with a sample from the same distribution. The parameters $\theta_{n+1}$ follow the recursion

$$\theta_{n+1} = \theta_n - \epsilon \Delta_{n}$$
where at time $n$, the perturbation $\Delta_n$ is computed via Algorithm \ref{mvd-der-little} as described above.

The parameters of Algorithm \ref{mvd-der-little} were set up as follows. The value of $M^0$ was 10 and we experimented with two possible values for $M^1$, either $M^1=10$ or $M^1=50$. Common random numbers were used in the routines for updating $x^+$ and $x^-$ for variance reduction. The algorithm ran for 30000 parameter updates and the empirical error is reported every 500 updates.
Results are shown in Figure~\ref{fig:mvderr}. The trend is similar but using $M^1= 50$ iterations inside Algorithm \ref{mvd-der-little} seems to have less variance.

\section{Discussion}
In this {\color{black} paper} we studied gradient estimation for the Little model.  Since a closed form solution for the stationary distribution is not known, methods which work for similar models such as the Boltzmann machine or sigmoid belief network cannot be used.
 To address this we introduced a method to calculate derivatives in the Little model based on measure-valued differentiation. To get a gradient estimate this way, one has to run the Little model for some time to get an initial state, then generate a random direction, and then run two Markov kernels $P_{\theta,v}^+$ and $P_{\theta,v}^{-}$ and use the observed errors in both chains to form the gradient estimate. There are several parameters of the method - the $M^0$ and $M^1$ of Algorithm \ref{mvd-der-little}. Future work will study the dependence of the algorithm performance, such as bias and variance, on these parameters. A more detailed numerical study will also aid in tuning these parameters. The authors believe the general idea of pairing random directions with measure-valued-differentiation could enable optimization in other models as well.

%
%


\bibliographystyle{spmpsci}      
\bibliography{super}   

\begin{thebibliography}{10}
\providecommand{\url}[1]{{#1}}
\providecommand{\urlprefix}{URL }
\expandafter\ifx\csname urlstyle\endcsname\relax
  \providecommand{\doi}[1]{DOI~\discretionary{}{}{}#1}\else
  \providecommand{\doi}{DOI~\discretionary{}{}{}\begingroup
  \urlstyle{rm}\Url}\fi

\bibitem{ackley1985learning}
Ackley, D.H., Hinton, G.E., Sejnowski, T.J.: A learning algorithm for boltzmann
  machines.
\newblock Cognitive science \textbf{9}(1), 147--169 (1985)

\bibitem{apolloni1991learning}
Apolloni, B., de~Falco, D.: Learning by asymmetric parallel boltzmann machines.
\newblock Neural Computation \textbf{3}(3), 402--408 (1991)

\bibitem{synchboltzgrad1991}
Apolloni, B., de~Falco, D.: Learning by parallel boltzmann machines.
\newblock IEEE Transactions on Information Theory \textbf{37}(4), 1162--1165
  (1991).
\newblock \doi{10.1109/18.87009}

\bibitem{Cao1998}
Cao, X.R.: The relations among potentials, perturbation analysis, and markov
  decision processes.
\newblock Discrete Event Dynamic Systems \textbf{8}(1), 71--87 (1998).
\newblock \doi{10.1023/A:1008260528575}.
\newblock \urlprefix\url{https://doi.org/10.1023/A:1008260528575}

\bibitem{ermoliev}
Ermoliev, Y.: stochastic quasigradient methods and their application to system
  optimization.
\newblock Stochastics \textbf{9}(1-2), 1--36 (1983).
\newblock \doi{10.1080/17442508308833246}.
\newblock \urlprefix\url{http://dx.doi.org/10.1080/17442508308833246}

\bibitem{heidergott2003taylor}
Heidergott, B., Hordijk, A.: Taylor series expansions for stationary markov
  chains.
\newblock Advances in Applied Probability pp. 1046--1070 (2003)

\bibitem{mvdrandom}
Heidergott, B., V{\'a}zquez-Abad, F.J.: Measure-valued differentiation for
  random horizon problems.
\newblock Markov Processes And Related Fields \textbf{12}(3), 509--536 (2006)

\bibitem{heidergott2008measure}
Heidergott, B., V{\'a}zquez-Abad, F.J.: Measure-valued differentiation for
  markov chains.
\newblock Journal of Optimization Theory and Applications \textbf{136}(2),
  187--209 (2008).
\newblock \doi{10.1007/s10957-007-9297-7}.
\newblock \urlprefix\url{http://dx.doi.org/10.1007/s10957-007-9297-7}

\bibitem{hvapt}
Heidergott, B., V\'{a}zquez-Abad, F.J., Pflug, G., Farenhorst-Yuan, T.:
  Gradient estimation for discrete-event systems by measure-valued
  differentiation.
\newblock ACM Trans. Model. Comput. Simul. \textbf{20}(1), 5:1--5:28 (2010).
\newblock \doi{10.1145/1667072.1667077}.
\newblock \urlprefix\url{http://doi.acm.org/10.1145/1667072.1667077}

\bibitem{hinton2006fast}
Hinton, G.E., Osindero, S., Teh, Y.W.: A fast learning algorithm for deep
  belief nets.
\newblock Neural computation \textbf{18}(7), 1527--1554 (2006)

\bibitem{hinton83}
Hinton, G.E., Sejnowski, T.J.: Optimal perceptual inference.
\newblock In: Proceedings of the IEEE conference on Computer Vision and Pattern
  Recognition. IEEE (1983)

\bibitem{kirkland03conditioningproperties}
Kirkland, S.: Conditioning properties of the stationary distribution for a
  markov chain.
\newblock Electronic Journal of Linear Algebra pp. 1--15 (2003)

\bibitem{kushner1978stochastic}
Kushner, H., Clark, D.: Stochastic Approximation Methods for Constrained and
  Unconstrained Systems.
\newblock No. v. 26 in Applied Mathematical Sciences. Springer-Verlag (1978)

\bibitem{little1974}
Little, W.A.: The existence of persistent states in the brain.
\newblock Mathematical biosciences \textbf{19}(1-2), 101--120 (1974)

\bibitem{MCCULLOCH199099}
McCulloch, W.S., Pitts, W.: A logical calculus of the ideas immanent in nervous
  activity.
\newblock The bulletin of mathematical biophysics \textbf{5}(4), 115--133
  (1943)

\bibitem{minsky1961steps}
Minsky, M.: Steps toward artificial intelligence.
\newblock Proceedings of the IRE \textbf{49}(1), 8--30 (1961)

\bibitem{neal}
Neal, R.M.: Connectionist learning of belief networks.
\newblock Artificial intelligence \textbf{56}(1), 71--113 (1992)

\bibitem{peretto1984collective}
Peretto, P.: Collective properties of neural networks: a statistical physics
  approach.
\newblock Biological cybernetics \textbf{50}(1), 51--62 (1984)

\bibitem{pflug1990line}
Pflug, G.C.: On-line optimization of simulated markovian processes.
\newblock Mathematics of Operations Research \textbf{15}(3), 381--395 (1990)

\bibitem{pflug-mvd}
Pflug, G.C.: Gradient estimates for the performance of markov chains and
  discrete event processes.
\newblock Annals of Operations Research \textbf{39}(1), 173--194 (1992).
\newblock \doi{10.1007/BF02060941}.
\newblock \urlprefix\url{http://dx.doi.org/10.1007/BF02060941}

\bibitem{pflug-book}
Pflug, G.C.: Optimization of Stochastic Models : The Interface Between
  Simulation and Optimization.
\newblock The Kluwer International Series in Engineering and Computer Science.
  Kluwer Academic Publishers (1996)

\bibitem{Rosenblatt58theperceptron}
Rosenblatt, F.: The perceptron: A probabilistic model for information storage
  and organization in the brain.
\newblock Psychological Review pp. 65--386 (1958)

\bibitem{ross1990course}
Ross, S.M.: A course in simulation.
\newblock Prentice Hall PTR (1990)

\bibitem{rumelhart1986learning}
Rumelhart, D.E., Hinton, G.E., Williams, R.J.: Learning representations by
  back-propagating errors.
\newblock Nature \textbf{323}(6088), 533--536 (1986)

\bibitem{harmonium}
Smolensky, P.: Information Processing in Dynamical Systems: Foundations of
  Harmony Theory, vol.~1, pp. 194--281.
\newblock MIT Press (1987).
\newblock
  \urlprefix\url{http://ieeexplore.ieee.org/xpl/articleDetails.jsp?arnumber=6302931}

\bibitem{spall1992multivariate}
Spall, J.C.: Multivariate stochastic approximation using a simultaneous
  perturbation gradient approximation.
\newblock IEEE transactions on automatic control \textbf{37}(3), 332--341
  (1992)

\bibitem{MultimodalDBM}
Srivastava, N., Salakhutdinov, R.: Multimodal learning with deep boltzmann
  machines.
\newblock In: Advances in Neural Information Processing Systems 25, pp.
  2231--2239 (2012)

\end{thebibliography}


\appendix
\section{Derivations Related to the Little Model}
\subsection{Derivation of Equation \ref{mvd-eqn}}\label{mvd-der-eqn}
Fix an $x^0$ and a direction 
$v \in \mathbb{R}^{n\times n}\times\mathbb{R}^n$. 
Then 
\begin{align*}
\nabla_{\lambda}P_{\theta + \lambda v}(x^{0},x^{1}) 
&= 
P_{\theta + \lambda v}(x^0,x^1)
\nabla_{\lambda}\log P_{\theta + \lambda v}(x^0,x^1)  \\
&=
P_{\theta + \lambda v}(x^0,x^1)
\sum\limits_{i=1}^{n}\nabla_{\lambda}
\log\left(\sigma( (x_{i}^{1})^{\dag}u_{i}(x^{0},\theta+\lambda v))\right)  \\
&= P_{\theta + \lambda v}(x^0,x^1)
\sum\limits_{i=1}^{n}
\left(1 -\sigma( (x_{i}^{1})^{\dag}u_{i}(x^{0},\theta+\lambda v))\right)
  (x_{i}^{1})^{\dag}
\nabla_{\lambda}u_{i}(x^{0},\theta+\lambda v) \\
&= 
P_{\theta + \lambda v}(x^0,x^1)
\sum\limits_{i=1}^{n}
\left(1 -\sigma( (x_{i}^{1})^{\dag}u_{i}(x^{0},\theta+\lambda v))\right)
(x_{i}^{1})^{\dag}
\left(\sum\limits_{j=1}^{n}v_{i,j}x^{0}_{j} + v_{i}\right).
\end{align*}
Evaluating this at $\delta=0$ we find that
\begin{equation}\label{delta-0-id}
\nabla_{\theta}P_{\theta}(x^{0},x^{1})v 
= P_{\theta }(x^0,x^1)
\sum\limits_{i=1}^{n}(1 -\sigma( (x_{i}^{1})^{\dag}u_{i}(x^{0},\theta)))
(x_{i}^{1})^{\dag}
\left(\sum\limits_{j=1}^{n}v_{i,j}x^{0}_{j}  + v_{i}\right).
\end{equation}
Note also that 
$$
(1- \sigma( x^{\dag}u))x^{\dag} = \begin{cases} 
(1-\sigma(u)) &\text{ if } x = 1   \\
-\sigma(u)&\text{ if } x = 0 
\end{cases}
$$
which means
\begin{equation}\label{a-sig-id}
(1-\sigma(x^{\dag}u))x^{\dag} = x - \sigma(u).
\end{equation}
Combining (\ref{delta-0-id}) and (\ref{a-sig-id}),
\begin{equation}
\begin{split}
&\nabla_{\theta}\textstyle\sum\limits_{x^{1}}e(x^{1})P_{\theta}(x^{0},x^{1})v 
\\&= 
\ts\sum\limits_{x^{1}}e(x^{1})P_{\theta}(x^0,x^1)
\ts\sum\limits_{i=1}^{n}(1 -\sigma( (x_{i}^{1})^{\dag}u_{i}(x^{0},\theta)))(x_{i}^{1})^{\dag}
\left(\ts\sum\limits_{j=1}^{n}v_{i,j}x^{0}_{j} + v_i\right)\\
&= 
\ts\sum\limits_{x^{1}}e(x^{1})P_{\theta}(x^0,x^1)
\ts\sum\limits_{i=1}^{n}(x_{i}^{1} -\sigma(u_{i}(x^{0},\theta)))
\left(\ts\sum\limits_{j=1}^{n}v_{i,j}x^{0}_{j} + v_i\right)\\
&=
\ts\sum\limits_{x^{1}}e(x^{1})P_{\theta}(x^0,x^1)\left[
\ts\sum\limits_{i=1}^{n}x_{i}^{1}
\left(\ts\sum\limits_{j=1}^{n}v_{i,j}x^{0}_{j} + v_i\right) 
-
\ts\sum\limits_{i=1}^{n}\sigma(u_{i}(x^{0},\theta))
\left(\ts\sum\limits_{j=1}^{n}v_{i,j}x^{0}_{j} + v_i\right)\right].
\end{split}
\end{equation}
Splitting each $v_{i,j}$ and $v_i$ into positive and negative parts,
\begin{equation}\label{the-big-one}
\begin{split}
&=
\ts\sum\limits_{x^{1}}e(x^{1})P_{\theta}(x^0,x^1)\left[
\ts\sum\limits_{i=1}^{n}x_{i}^{1}
                   \left(\ts\sum\limits_{j=1}^{n}(v_{i,j})_{+}x^{0}_{j} + (v_{i})_+\right)
-
\ts\sum\limits_{i=1}^{n}x_{i}^{1}
                 \left(\ts\sum\limits_{j=1}^{n}(v_{i,j})_{-}x^{0}_{j} + (v_{i})_{-}\right)\right]\\
&
-
\ts\sum\limits_{x^{1}}e(x^{1})P_{\theta}(x^0,x^1)\left[
\ts\sum\limits_{i=1}^{n}\sigma(u_{i}(x^{0},\theta))
\left(\ts\sum\limits_{j=1}^{n}(v_{i,j})_{+}x^{0}_{j} + (v_{i})_{+}\right)
-
\ts\sum\limits_{i=1}^{n}\sigma(u_{i}(x^{0},\theta))
\left(\ts\sum\limits_{j=1}^{n}(v_{i,j})_{-}x^{0}_{j} + (v_{i})_{-}\right)\right] \\
&=
\left(
  \ts\sum\limits_{x^{1}}e(x^{1})P_{\theta}(x^0,x^1)\left[
  \ts\sum\limits_{i=1}^{n}x_{i}^{1}
  \left(\ts\sum\limits_{j=1}^{n}(v_{i,j})_{+}x^{0}_{j} + (v_i)_{+}\right)
  +
  \ts\sum\limits_{i=1}^{n}\sigma(u_{i}(x^{0},\theta))
  \left(\ts\sum\limits_{j=1}^{n}(v_{i,j})_{-}x^{0}_{j} + (v_{i})_{-}\right) 
\right]\right)
\\&\quad-
\left(
  \ts\sum\limits_{x^{1}}e(x^{1})P_{\theta}(x^0,x^1)\left[
  \ts\sum\limits_{i=1}^{n}x_{i}^{1}
  \left(\sum\limits_{j=1}^{n}(v_{i,j})_{-}x^{0}_{j} + (v_i)_{-}\right)
  +
  \ts\sum\limits_{i=1}^{n}\sigma(u_{i}(x^{0},\theta))
  \left(\ts\sum\limits_{j=1}^{n}(v_{i,j})_{+}x^{0}_{j} + (v_{i})_{+}\right)\right]
\right) \\
&=
\ts\sum\limits_{x^{1}}e(x^{1})P_{\theta}(x^0,x^1)\sum\limits_{i=1}^{n}
\left[
  x_{i}^{1}\left(\ts\sum\limits_{j=1}^{n}(v_{i,j})_{+}x^{0}_{j} + (v_{i})_{+}\right)
  +
  \sigma(u_{i}(x^{0},\theta))
  \left(\ts\sum\limits_{j=1}^{n}(v_{i,j})_{-}x^{0}_{j} + (v_i)_{-} \right) 
\right]\\
&\quad-
\ts\sum\limits_{x^{1}}e(x^{1})P_{\theta}(x^0,x^1)\ts\sum\limits_{i=1}^{n}
\left[
  x_{i}^{1}\left(\sum\limits_{j=1}^{n}(v_{i,j})_{-}x^{0}_{j} + (v_i)_-\right)
  +
  \sigma(u_{i}(x^{0},\theta))\left(\ts\sum\limits_{j=1}^{n}(v_{i,j})_{+}x^{0}_{j} + (v_i)_{+}\right)
\right].
\end{split}
\end{equation}
Note that
\begin{equation}\label{c-confirm}
\begin{split}
&\sum\limits_{x^{1}}P_{\theta}(x^0,x^1)\sum\limits_{i=1}^{n}
\left[
x_{i}^{1}\left(\sum\limits_{j=1}^{n}(v_{i,j})_{+}x^{0}_{j} + (v_{i})_{+}\right)
+
\sigma(u_{i}(x^{0},\theta))
\left(\sum\limits_{j=1}^{n}(v_{i,j})_{-}x^{0}_{j} + (v_{i})_{-}\right) \right] \\
&=\sum\limits_{i=1}^{n}\left(\sum\limits_{x^{1}}P_{\theta}(x^0,x^1)x_{i}^{1}\right)
\left(\sum\limits_{j=1}^{n}(v_{i,j})_{+}x^{0}_{j} + (v_{i})_{+}\right)
+
\sum\limits_{i=1}^{n}\sigma(u_{i}(x^{0},\theta))
\left(\sum\limits_{j=1}^{n}(v_{i,j})_{-}x^{0}_{j} + (v_i)_{-}\right) \\
&=\sum\limits_{i=1}^{n}\sigma(u_{i}(x^{0},\theta))
\left(\sum\limits_{j=1}^{n}(v_{i,j})_{+}x^{0}_{j} + (v_i)_{+}\right)
+
\sum\limits_{i=1}^{n}\sigma(u_{i}(x^{0},\theta))
\left(\sum\limits_{j=1}^{n}(v_{i,j})_{-}x^{0}_{j} + (v_{i})_{-}\right) \\
&=
\sum\limits_{i=1}^{n}\sigma(u_{i}(x^0))|v_i| + 
\sum\limits_{i=1}^{n}\sum\limits_{j=1}^{n}
\sigma(u_{i}(x^{0},\theta))|v_{i,j}|x^{0}_{j}.
\end{split}
\end{equation}
Combining (\ref{the-big-one}) with (\ref{c-confirm}) and the definitions (\ref{little-pplus-def}) and (\ref{little-pminus-def}) we obtain (\ref{mvd-eqn}).
\subsection{Derivation of Equations \ref{little-p-one} and \ref{little-p-gen}}\label{mvd-p}
We have
\begin{equation}\label{bzb0}
\begin{split}
Q(x_1) 
&= 
\sum\limits_{x_2 \in \{0,1\}}\hdots\sum\limits_{x_n \in \{0,1\}}
Q(x_1,x_2,\hdots,x_n) \\
&=
\sum\limits_{x_2 \in \{0,1\}}\hdots\sum\limits_{x_n \in \{0,1\}}
\frac{1}{c}
\prod_{i=1}^{n}
\beta_i^{x_i}
(1-\beta_i)^{1-x_i}
\left(d + \sum\limits_{i=1}^{n}\alpha_{i}x_i \right)\\
&=
\sum\limits_{x_2 \in \{0,1\}}\hdots\sum\limits_{x_n \in \{0,1\}}
\frac{1}{c}
\prod_{i=2}^{n}\beta_i^{x_i}(1-\beta_i)^{1-x_i}
\beta_1^{x_1}(1-\beta_1)^{1-x_{1}}
\left(d + \alpha_1x_1 + \sum\limits_{i=2}^{n}\alpha_{i}x_i \right)\\
&=
\beta_1^{x_1}(1-\beta_1)^{1-x_1}
\sum\limits_{x_2 \in \{0,1\}}\hdots\sum\limits_{x_n \in \{0,1\}}
\frac{1}{c}
\prod_{i=2}^{n}\beta_i^{x_i}(1-\beta_i)^{1-x_i}
\left(d + \alpha_1x_1 + \sum\limits_{i=2}^{n}\alpha_{i}x_i \right)\\
&=
\beta_1^{x_1}(1-\beta_1)^{1-x_1}
\sum\limits_{x_2 \in \{0,1\}}\hdots\sum\limits_{x_n \in \{0,1\}}
\frac{1}{c}\prod_{i=2}^{n}\beta_i^{x_i}(1-\beta_i)^{1-x_i}
\left(d + \sum\limits_{i=2}^{n}\alpha_{i}x_i \right) \\
&\quad\quad+
\beta_1^{x_1}(1-\beta_1)^{1-x_1}
\sum\limits_{x_2 \in \{0,1\}}\hdots\sum\limits_{x_n \in \{0,1\}}
\frac{1}{c}\prod_{i=2}^{n}\beta_i^{x_i}(1-\beta_i)^{1-x_i}\alpha_1x_1 \\
&= 
\beta_1^{x_1}(1-\beta_1)^{1-x_1}
\alpha_1x_1\frac{1}{c} \\&\quad\quad+
\beta_1^{x_1}(1-\beta_1)^{1-x_1}
\frac{1}{c}
\sum\limits_{x_2 \in \{0,1\}}\hdots\sum\limits_{x_n \in \{0,1\}}
\prod_{i=2}^{n}\beta_i^{x_i}(1-\beta_i)^{1-x_i}\left(d + \sum\limits_{i=2}^{n}\alpha_{i}x_i \right).
\end{split}
\end{equation}
To simplify this equation, note that for $n>1$,
\begin{equation}\label{bzb1}
\begin{split}
&\sum\limits_{x_1 \in \{0,1\}}\hdots\sum\limits_{x_n \in \{0,1\}}
\prod_{i=1}^{n}\beta_i^{x_i}(1-\beta_i)^{1-x_i}
\left( d + \sum\limits_{i=1}^{n}a_ix_i\right) \\&= 
\sum\limits_{x_2 \in \{0,1\}}\hdots\sum\limits_{x_n \in \{0,1\}}\bigg[
\beta_1\prod_{i=2}^{n}\beta_i^{x_i}(1-\beta_i)^{1-x_i}
\left( d + \sum\limits_{i=2}^na_ix_i + a_1\right) 
\\&\quad\quad\quad\quad\quad\quad\quad\quad\quad+ 
(1-\beta_1)\prod_{i=2}^{n}\beta_i^{x_i}(1-\beta_i)^{1-x_i}
\left(d + \sum\limits_{i=2}^na_ix_i\right)\bigg] \\
&=\sum\limits_{x_2 \in \{0,1\}}\hdots\sum\limits_{x_n \in \{0,1\}}\bigg[
\beta_1a_1\prod_{i=2}^{n}\beta_i^{x_i}(1-\beta_i)^{1-x_i}
+ 
\prod_{i=2}^{n}\beta_i^{x_i}(1-\beta_i)^{1-x_i}
\left(d + \sum\limits_{i=2}^na_ix_i\right) \bigg]\\
\\&= 
\beta_1\alpha_1 
+ 
\sum\limits_{x_2 \in \{0,1\}}\hdots\sum\limits_{x_n \in \{0,1\}}
\prod_{i=2}^{n}\beta_i^{x_i}(1-\beta_i)^{1-x_i}
\left(d + \sum\limits_{i=2}^na_ix_i\right), 
\end{split}
\end{equation}
and if $n=1$ then
\begin{equation}\label{bzb2}
\begin{split}
\sum\limits_{x_1 \in \{0,1\}}\prod_{i=1}^{n}\beta_i^{x_i}(1-\beta_i)^{1-x_i}
\left(d + \sum\limits_{i=1}^{n}a_ix_i\right) 
&= \beta_1(d + a_1) + (1-\beta_1)d  \\
&= 
\beta_1d + \beta_1a_1 + d - \beta_1d = \beta_1a_1 +d.
\end{split}\end{equation}
Combining Equation \ref{bzb1} and Equation \ref{bzb2},  we see that for any $n\geq 1$,
\begin{equation}\label{bzb3}
\sum\limits_{x_1 \in \{0,1\}}\hdots\sum\limits_{x_n \in \{0,1\}}
\prod_{i=1}^{n}\beta_i^{x_i}(1-\beta_i)^{1-x_i}
\left( d + \sum\limits_{i=1}^{n}a_ix_i \right) 
= d + \sum\limits_{i=1}^{n}\beta_i\alpha_i.
\end{equation}
Combining Equation \ref{bzb0} with Equation \ref{bzb3},
\begin{align*}
Q(x_1) 
&= 
\beta_1^{x_1}(1-\beta_1)^{1-x_1}\frac{\alpha_1x_1}{c} 
+ 
\beta_1^{x_1}(1-\beta_1)^{1-x_1}\frac{1}{c}
\left(d + \sum\limits_{i=2}^{n}\beta_i\alpha_i\right) \\
&= 
\beta_1^{x_1}(1-\beta_1)^{1-x_1}\frac{1}{c}
\left[d + \alpha_1x_1 + \sum\limits_{i=2}^{n}\beta_i\alpha_i\right].
\end{align*}
In general,
\begin{align*}
&Q(x_k, x_{k-1},\hdots, x_1)  \\
&= 
\ts\sum\limits_{x_{k+1} \in \{0,1\}}\hdots\sum\limits_{x_n \in \{0,1\}}
Q(x_1,\hdots,x_k,,\hdots,x_n) \\
&=
\ts\sum\limits_{x_{k+1} \in \{0,1\}}\hdots\sum\limits_{x_n \in \{0,1\}}
\frac{1}{c}\prod_{i=1}^{n}\beta_i^{x_i}(1-\beta_i)^{1-x_i}
\left( d + \sum\limits_{i=1}^{n}a_{i}x_i\right) \\
&=
\ts\sum\limits_{x_{k+1} \in \{0,1\}}\hdots\sum\limits_{x_n \in \{0,1\}}
\frac{1}{c}
\prod_{i=k+1}^{n}\beta_i^{x_i}(1-\beta_i)^{1-x_i} \\
&\quad\quad\times
\beta_k^{x_k}(1-\beta_k)^{1-x_k}
\prod_{i=1}^{k-1}\beta_i^{x_i}(1-\beta_i)^{1-x_i}
\left(
   \sum\limits_{i=1}^{k-1}a_ix_i + a_kx_k + d + \sum\limits_{i=k+1}^{n}a_{i}x_i 
   \right) \\
&=
\ts\sum\limits_{x_{k+1} \in \{0,1\}}\hdots\sum\limits_{x_n \in \{0,1\}}
\frac{1}{c}
\prod_{i=k+1}^{n}\beta_i^{x_i}(1-\beta_i)^{1-x_i}
\beta_k^{x_k}(1-\beta_k)^{1-x_k}
\prod_{i=1}^{k-1}\beta_i^{x_i}(1-\beta_i)^{1-x_i}
\left(a_kx_k + \sum\limits_{i=1}^{k-1}a_ix_i\right)  \\
&\quad+
\ts\sum\limits_{x_{k+1} \in \{0,1\}}\hdots\sum\limits_{x_n \in \{0,1\}}
\frac{1}{c}
\prod_{i=k+1}^{n}\beta_i^{x_i}(1-\beta_i)^{1-x_i}
\beta_k^{x_k}(1-\beta_k)^{1-x_k}
\prod_{i=1}^{k-1}\beta_i^{x_i}(1-\beta_i)^{1-x_i}
\left(d + \sum\limits_{i=k+1}^{n}a_{i}x_i \right)
 \\
&=
\ts\beta_k^{x_k}(1-\beta_k)^{1-x_k}\frac{1}{c}
\prod_{i=1}^{k-1}\beta_i^{x_i}(1-\beta_i)^{1-x_i}
\left(a_kx_k + \sum\limits_{i=1}^{k-1}a_ix_i\right)  \\
&\quad+
\ts\beta_k^{x_k}(1-\beta_k)^{1-x_k}\frac{1}{c}
\prod_{i=1}^{k-1}\beta_i^{x_i}(1-\beta_i)^{1-x_i}
\sum\limits_{x_{k+1} \in \{0,1\}}\hdots\sum\limits_{x_n \in \{0,1\}}
\prod_{i=k+1}^{n}\beta_i^{x_i}(1-\beta_i)^{1-x_i}
\left(d + \sum\limits_{i=k+1}^{n}a_{i}x_i \right) \\
&=
\ts\beta_k^{x_k}(1-\beta_k)^{1-x_k}\frac{1}{c}
\prod_{i=1}^{k-1}\beta_i^{x_i}(1-\beta_i)^{1-x_i}
\left(a_kx_k + \sum\limits_{i=1}^{k-1}a_ix_i\right)  \\
&\quad+
\ts\beta_k^{x_k}(1-\beta_k)^{1-x_k}\frac{1}{c}
\prod_{i=1}^{k-1}\beta_i^{x_i}(1-\beta_i)^{1-x_i}
\left( d +\sum\limits_{i=k+1}^{n}\beta_i\alpha_i\right) \\
&=
\ts\frac{1}{c}
\prod_{i=1}^{k}\beta_i^{x_i}(1-\beta_i)^{1-x_i}
\left(d+ \sum\limits_{i=1}^{k}a_ix_i + \sum\limits_{i=k+1}^{n}\beta_i\alpha_i\right).
\end{align*}

\end{document}